\numberwithin{equation}{section}
\theoremstyle{plain}
\newtheorem{thm}{Theorem}[section]
\newtheorem{lm}[thm]{Lemma}
\newtheorem{prop}[thm]{Proposition}
\theoremstyle{definition}
\def\R{\mathbb{R}}
\def\Z{\mathbb{Z}}
\def\e{\varepsilon}
\let\a\alpha
\let\cal\mathcal
\def\sh{{\rm sh}}
\def\ch{{\rm ch}}
\def\arcch{{\rm arcch}}
\let\poll\l
\let\l\lambda 
\let\check\widecheck
\def \le {\leqslant}
\def \ge {\geqslant}
\let\phi\varphi
\let\kappa\varkappa
\def\bal{\begin{align*}}
\def\eal{\end{align*}}
\def\beq{\begin{equation}}
\def\eeq{\end{equation}}
\title[Norms of certain functions of a distinguished Laplacian]{Norms of certain functions of a distinguished Laplacian on the $ax+b$ groups}
\author[R.~Akylzhanov]{Rauan Akylzhanov}
\address{Rauan Akylzhanov, School of Mathematical Sciences, Queen Mary University of London, Mile End Road, London E1 4NS, United Kingdom}
\email{akylzhanov.r@gmail.com}
\author[Yu.~Kuznetsova]{Yulia Kuznetsova}
\address{Yulia Kuznetsova, University Bourgogne Franche-Comt\'e, 16 route de Gray, 25030 Besan\c con, France}
\email{yulia.kuznetsova@univ-fcomte.fr}
\author[M.~Ruzhansky]{Michael Ruzhansky}
\address{Michael Ruzhansky, Department of Mathematics: Analysis, Logic and Discrete Mathematics, Ghent University, Krijgslaan 281, Building S8, B 9000 Ghent, Belgium, and School of Mathematical Sciences, Queen Mary University of London, Mile End Road, London E1 4NS, United Kingdom}
\email{Michael.Ruzhansky@ugent.be}
\author[H.~Zhang]{Haonan Zhang}
\address{Haonan Zhang, Institute of Science and Technology Austria (IST Austria),
	Am Campus 1, 3400 Klosterneuburg, Austria}
\email{haonan.zhang@ist.ac.at}
\subjclass[2010]{22E30; 43A15; 42B15; 35L05; 43A90} 
\begin{document}

\maketitle
\begin{abstract}
	The aim of this paper is to find new estimates for the norms of functions of the (minus) distinguished Laplace operator $\cal L$ on the `$ax+b$' groups. 
	
	The central part is devoted to spectrally localized wave propagators, that is, functions of the type 
	$\psi(\sqrt{\cal L})\exp(it \sqrt{\cal L})$, with $\psi\in C_0(\R)$. We show that for $t\to+\infty$, the convolution kernel $k_t$ of this operator satisfies
	$$
	\|k_t\|_1\asymp t, \qquad \|k_t\|_\infty\asymp 1,
	$$
	so that the upper estimates of D. M\"uller and C. Thiele (Studia Math., 2007) are sharp.
	
	As a necessary component, we recall the Plancherel density of $\cal L$ and spend certain time presenting and comparing different approaches to its calculation. Using its explicit form, we estimate uniform norms of several functions of the shifted Laplace-Beltrami operator $\tilde\Delta$, closely related to $\cal L$. The functions include in particular $\exp(-t\tilde\Delta^\gamma)$, $t>0,\gamma>0$, and $(\tilde\Delta-z)^s$, with complex $z,s$.
	
\end{abstract}


\section{Introduction}

Let $G$ denote the `$ax+b$' group of dimension $n+1\ge2$, parameterized as $G = \{ (x,y)\in\R\times\R^n\}$, with multiplication given by
$$
(x,y)\cdot (x',y') = (x+x', y+ e^x y');
$$
in the case $n=1$, it is the group of affine transformations of the real line.
In this form, the right Haar measure is just $dm_r(x,y)=dx dy$.
This group is well-known to be non-unimodular, solvable, and of exponential growth; the modular function is given by $\delta(x,y) = e^{-nx}$.

Let $X= \dfrac{\partial }{\partial x}$ and $Y_k=e^x \dfrac{\partial }{\partial y_k} $, $1\le k\le n$, denote left-invariant vector fields on $G$, given also by $\exp(tX)=(t,0)$, $\exp(tY_k)=(0,te_k)$, $t\in\R$ (where $(e_k)$ is the standard basis in $\R^n$). We consider the (minus) distinguished Laplacian $\cal L = -X^2-\sum_{k=1}^n Y_k^2$. A detailed exposition of this setting 
 can be found in \cite{hulan-cm}.

$\cal L$ is a positive self-adjoint operator on $L^2(G,m_r)$, and for any bounded Borel function $\psi$ on $\R$ one can define, by the spectral theorem, a bounded operator $\psi(\cal L)$ on $L^2(G,m_r)$. For $\psi$ good enough, this is a (right) convolution operator with a kernel $k_\psi$, for which an explicit formula is available \cite[Proposition 4.1]{mueller-thiele}. There has been much work to determine in which cases $\psi(\cal L)$ is also bounded on $L^p(G,m_r)$, $p\ne2$ (see \cite{cow-hulan} and references therein, or more recently \cite{hebisch-steger,mueller-thiele}).

Let us put this into a wider context. If we consider a connected Lie group $G$ as a Riemannian manifold, endowed with the left-invariant distance, we obtain the Laplace--Beltrami operator $\Delta$ on $G$. In the unimodular case, $\Delta$ is itself a distinguished Laplacian as defined above, that is, $\Delta = -\sum_k X_k^2$ where $(X_k)$ is a basis of the Lie algebra of $G$. If $G$ is non-unimodular, however, these two operators $\Delta$ and $\cal L$ are different and can have significantly different behaviour.

By the classical H\"ormander-Mikhlin theorem, a function of the Laplacian $m(\Delta)$ is a multiplier on $L^p(\R^n)$ if $[n/2]+1$ derivatives of $m$ decrease quickly enough. One can observe a similar behaviour on any group of polynomial growth \cite{alex}. On the other hand, on symmetric spaces of non-compact type, a class which includes the `ax+b' groups and implies exponential growth, the function $m$ must be holomorphic in a strip around the spectrum of $\Delta$ in order to generate an $L^p$-multiplier, $p\ne2$ \cite{clerc-stein, taylor, anker-ann}.

It turns out, surprisingly, that the volume growth does not determine the behaviour of the distinguished Laplacian $\cal L$ as defined above: on the $AN$ groups, in particular on `ax+b' groups, a finite number of derivatives is sufficient for an $L^p$ multiplier \cite{hebisch-pams,hebisch-steger}. One can ask then what is the class of groups carrying Laplacians of different kind, as the `ax+b' groups do. One could conjecture that these are solvable groups of exponential growth; this has been disproved however by an example of a semidirect product of $\R$ with the $(2n+1)$-dimensional Heisenberg group $\mathbb{H}^n$ \cite{christ-mueller}. This question remains open, and the criterium might be the symmetry of the group algebra $L^1(G)$ \cite{christ-mueller}.

Looking closer at the multipliers, one can ask how far goes the ressemblance between the Laplacian $\cal L$ of ``H\"ormander--Mikhlin type'' and that of $\R^n$. In particular, in applications to PDEs oscillating functions of the type
$$
A_t = \psi(\sqrt{\cal L}) \cos(t\sqrt{\cal L}), \quad B_t = \psi(\sqrt{\cal L}) \frac{\sin(t\sqrt{\cal L})}{\sqrt{\cal L}}
$$
are of special importance. With $\psi\equiv 1$, they give the solutions of the wave equation: the function $u_t = A_tf+B_tg$ solves the Cauchy problem
\begin{align*}
&\frac{\partial^2 u}{\partial t^2}  = -\cal Lu,\\
&u(0,x) = f(x), \quad \frac{\partial u}{\partial t}(0,x) = g(x),
\end{align*}
for a priori $f,g\in L^2(G)$. One cannot of course apply the H\"ormander--Mikhlin theorem in this case. By other methods, one shows that the wave propagators $\cos(t\sqrt{\Delta})$, $\dfrac{\sin(t\sqrt{\Delta})}{\sqrt{\Delta}}$ are bounded on $L^p(\R^n)$ if and only if $\big|\frac1p-\frac12\big| < \frac 1{n-1}$ \cite{peral}, so that for a general $p$ a ``localizing'' factor $\psi$ should be introduced.

With $\psi(s)=(1+s^2)^{-\alpha}$, norm estimates of $A_t,B_t$ provide estimates of $u$ in terms of Sobolev norms $\|\cdot\|_{H^{\alpha,p}} = \|(1+\Delta^2)^{\alpha/2} \cdot\|_p$ of $f$ and $g$. In this paper, we are interested in $L^1-L^1$ and $L^1-L^\infty$ estimates.
In $\R^n,n>1$ for example,
$$
\|u_t\|_\infty \le C t^{-(n-1)/2} (\|f\|_{H^{\alpha,1}} + \|g\|_{H^{\alpha,1}}),
$$
for $\alpha$ big enough. More on this topic can be found in \cite{BCD,tao}.

For the (shifted) Laplace--Beltrami operator on hyperbolic spaces, in particular for the $ax+b$ groups, it has been proved by D.~Tataru \cite{tataru} that $L^1-L^\infty$ norms of the same operators decay exponentially in time. See also \cite{ionescu,cow,pierfelice,anker-2012}.

In this context it was surprising that D.~M\"uller and C.~Thiele \cite{mueller-thiele} did not get any decay in time for the distinguished Laplace operator on the $ax+b$ group: they show that $L^1$ norms of $A_t,B_t$ are bounded by $C(1+|t|)$, for $\psi\in C_0(\R)$ quickly decreasing, while their $L^1-L^\infty$ norms are just bounded by a constant, for a compactly supported $\psi$.

In the present paper, we prove that these estimates are actually sharp at $t\to+\infty$, obtaining lower bounds for $L^1$ and $L^1-L^\infty$ norms of $A_t,B_t$.
 In particular, $L^1-L^\infty$ estimates do not demonstrate any dispersive-type effect, confirming a conjecture of \cite{mueller-thiele}.  The function $\psi$ is supposed nonzero with certain decrease rate.
For the uniform estimate, there is no need to suppose $\psi$ compactly supported, so that both estimates are applicable to $\psi(s) = (1+s^2)^{-\alpha}$. The main part of the proofs is concentrated in Section \ref{sec-lower}.

Generalizing \cite{mueller-thiele}, M\"uller and M.~Vallarino \cite{mueller-vall} obtained upper estimates for the kernels of analogous operators $A_t$, $B_t$ on general Damek--Ricci spaces. Their sharpness is yet unknown.

We thank J.-Ph.~Anker for indicating us the following result \cite{anker-2011}, valid as well in the more general situation of Damek--Ricci spaces: The convolution kernel of $\exp(it\cal L)$ is not bounded for any nonzero $t\in\R$, so that the dispersive $L^1-L^\infty$ estimates do not hold for the Schr\"odinger equation associated to the distinguished Laplacian $\cal L$ on these spaces. However, Strichartz estimates do hold, in a weighted form. This suggests that for the wave equation Strichartz estimates might hold as well.

The rest of the paper is organized as follows. In Section 2, we collect necessary notations and conventions.

To explain the sequel, let us denote by $\tilde\Delta$ the shifted Laplace--Beltrami operator (more details in Section \ref{sec-measure}). The two Laplacians are linked by $\tilde\Delta = \delta^{-1/2}\cal L\delta^{1/2}$ (here $\delta$ stands for multiplication by the modular function), and the kernels of $\psi(\tilde\Delta)$ and  $\psi(\cal L)$ are related as $\tilde k_{\psi} = \delta^{-1/2} k_{\psi}$, both acting by right convolution.

It is known that the $L^2$-norm of $\tilde k_\psi$ is the same as of $k_\psi$ and  
is given by the integral $$\|k_\psi\|_{L^2(G,m_r)}^2 = \int_\R |\psi|^2\rho$$
with a density $\rho$ expressed via the Harish--Chandra $c$-function \cite{helgason,cow-hulan}. This appears as a building block in several multiplier estimates \cite{cow-hulan,hebisch-edin,hebisch-steger}. In Section \ref{sec-measure}, we take time to write down an explicit formula for the spectral density of the Laplacian and to relate to each other several ways to obtain it.
We put forward the fact, seemingly not discussed before, that the same density can be used also to estimate uniform norms of $\tilde k_\psi$.

In Section \ref{sec-upper}, we show that this method makes it an easy calculation to obtain exact asymptotics of the uniform norms. We find it first for $\exp(-t\tilde \Delta^\gamma)$, $\gamma>0$. Further we pass to the more technical task of estimating the norms of $(\tilde \Delta-z)^{-s}$ with $z,s$ complex ($z$ outside of the spectrum of $\tilde \Delta$, and $\Re s>(n+1)/2$). J.-P.~Anker\cite{anker-duke} has shown, among others, that the kernel of $(\tilde \Delta-z)^{-s}$ (with $z$ outside of the positive half-line) is bounded if and only if $\Re s\ge(n+1)/2$, $s\ne(n+1)/2$. 
Going rather explicitly into integration with the help of special functions, we get estimates of actual uniform norms of these functions in the same region except for the border, and obtain asymptotic bounds for $\Im z\to0$ and for $|z|\to\infty$.

The last Section \ref{sec-lower} contains, as mentioned above, the lower estimates of $L^1$ and uniform norms of the operators $A_t,B_t$ above.

\section{Notations and conventions}

We have chosen to work (mainly) with the right Haar measure $dm_r=dxdy$ and the right convolution:
$$
(f*_r g)(x) = \int_G f(xy) g(y^{-1}) \delta(y) dm_r(y).
$$
One can alternatively opt to the left Haar measure $dm_l = e^{-nx} dx\, dy$, so that $dm_r=\delta^{-1} dm_l$, and the left convolution
$$
(f*_l g)(x) = \int_G f(y) g(y^{-1}x) dm_l(y).
$$
For a function $f$ on $G$, denote $\check f(g)=f(g^{-1})$. For every $p$, the map $\tau:f\mapsto \check f$ is an isometry from $L^p(G,m_r)$ to $L^p(G,m_l)$ (or back). If $R_k:f\mapsto f*_r k$ is a right convolution operator with the kernel $k$, then $\tau R_k \tau = L_{\check k}$ is a left convolution operator with the kernel $\check k$, and
\begin{equation*}
\|L_{\check k}:L^p(G,m_l)\to L^p(G,m_l)\|=\|R_{k}:L^p(G,m_r)\to L^p(G,m_r)\|.
\end{equation*}
This means that choosing one or another convention changes nothing in norm estimates.

Below we do not use the symbol $*_r$ anymore and write just $*$ instead. We will use $\|\cdot \|_p$ and $L^p(G)$ to denote $\|\cdot \|_{L^p(G,m_r)}$ and $L^p(G,m_r)$, respectively. 

\bigskip
{\bf The distance}.
The left-invariant Riemannian distance on $G$ is given by $d\big((x,y),(0,0)\big) =: R(x,y)$, where
\begin{equation}\label{R}
R(x,y) = \arcch\Big( \ch x + \frac12 |y|^2 e^{-x} \Big).
\end{equation}
This implies in particular that $|x|\le R$ and
$$
|y|^2 = 2 e^x (\ch R - \ch x) \le 2 e^{x+R}. 
$$

\bigskip
{\bf The Plancherel weight}.
Every bounded left-invariant operator on $L^2(G,m_r)$ belongs to the right von Neumann algebra $VN_R(G)$, defined as the strong operator closure of the set of right translation operators. This applies, in particular, to $\psi(\cal L)$ with a bounded function $\psi$. Similarly, every right-invariant operator on $L^2(G,m_l)$ belongs to the left von Neumann algebra $VN_L(G,m_l)$.

On $VN_L(G)$, we have the Plancherel weight $\phi$ (see more in \cite{pedersen}), which can be viewed as an integral of operators. For an operator $L_k$ of left convolution with the kernel $k\in L^2(G,m_l)\cap L^1(G,m_l)$, one has
$$
\phi(L_k^*L_k) = \|k\|_{L^2(G,m_l)}^2.
$$
If $k=g^**g$ for some $g$ and is moreover continuous, then $\phi(L_k) = k(e)$.

Almost all the literature on Plancherel weights assumes this $VN_L(G)$ convention, but one can also consider a similar weight $\phi_r$ on $VN_R(G)$ by setting
\begin{equation}\label{phi-plancherel}
\phi_r(R_kR_k^*) = \|k\|_{L^2(G,m_r)}^2
\end{equation}
for an operator of right convolution with the kernel $k$. The two algebras are isomorphic by $A\mapsto \tau A\tau$, and $\phi_r(A) = \phi(\tau A\tau)$. This isomorphism transfers also, as mentioned above, $R_k\in VN_R(G)$ to $L_{\check k}\in VN_L(G)$, where $\check k(g) = k(g^{-1})$.

\section{Spectral measure of the Laplacian}\label{sec-measure}

The aim of this section is to write explicitly the Plancherel measure of the distinguished Laplace operator and show the relation between several apparently different approaches to its calculation.

\subsection{Plancherel measure and the Harish-Chandra $c$-function}\label{sec-Harish-Chandra}

The Plancherel measure for the spherical transform on a connected semisimple Lie group is given by a so called $c$-function found by Harish-Chandra. Many faces of this function are described in an excellent survey \cite{helgason}, of which we will need only a few facts.

Recall first that the Laplace--Beltrami operator $\Delta$ has a spectral gap: its spectrum is $[\sigma,+\infty)$ where $\sigma$ is a constant depending on the group, $\sigma=\frac{n^2}4$ in our case. For this reason, closer to $\cal L$ in its properties is the shifted operator $\tilde\Delta = \Delta-\sigma$ which has $[0,+\infty)$ as its spectrum. The two operators are now linked by $\tilde\Delta = \delta^{-1/2}\cal L\delta^{1/2}$ (here $\delta$ stands for multiplication by the modular function).

Next, it is known that the $L^2$-norm of a radial function on $G$ can be expressed via the Harish-Chandra $c$-function \cite{helgason,cow-hulan}.
This can be applied to the convolution kernels of $\tilde\Delta$ and functions of it, as these kernels are radial. If $\tilde k_f$ is the kernel of $f(\tilde\Delta)$, we have
$$
\|\tilde k_f\|_2^2 = c_G \int_\R |f(\lambda^2)|^2 |c(\lambda)|^{-2}d\lambda.
$$
If $k_f$ is the right convolution kernel of $f(\cal L)$, then \cite{cow-hulan} $k_f=\delta^{1/2}\tilde k_f$ and $\|k_f\|_{L^2(G,m_r)}
=\|\tilde k_f\|_{L^2(G,m_r)}$ (this equality is verified by a direct calculation, knowing that $\tilde k_f(x^{-1})=\tilde k_f(x)$), so that the formula above is valid for $k_f$ as well.
An explicit formula of the $c$-function is known, and for the $n$-dimensional `ax+b' group it is as follows \cite{helgason}: for $\lambda\in\R$,
$$
c(\l) = \frac{c_0 2^{-i\l} \Gamma(i\l)} { \Gamma(\frac12 (\frac12 n + 1 + i\l))
 \Gamma( \frac12 (\frac12 n + i\l) )}.
$$
Since $\Gamma(z) \Gamma(z+\frac12) = \sqrt\pi 2^{1-2z} \Gamma(2z)$ \cite[1.2]{BE}, this simplifies up to
\begin{align*}
c(\l) 
= c_1\frac{\Gamma(i\l)} {\Gamma(\frac n2 + i\l)},
 \end{align*}
with $c_1=\pi^{-1/2} 2^{n/2-1}c_0$. Denote $\rho(u) = c_1^2 u^{-1/2} |c(\sqrt u)|^{-2}$, so that
$$
\|k_f\|_2^2 = C \int_\R |f(u)|^2 \rho(u) du
$$
(we will also write $\rho_n$ to indicate the dimension).
If $n=2l$ is even, $1/c$ is a polynomial: 
$$
\frac1{c(\l)} = c_1^{-1} \prod_{j=0}^{l-1}(j+i\l),
$$
so that
$$
\rho_{2l}(u) = \sqrt u \prod_{j=1}^{l-1} (j^2+u).
$$
If $n=2l+1$ is odd, we have
$$
\frac1{c(\l)} = c_1 \prod_{j=0}^{l-1}(j+\frac12+i\l) \frac{\Gamma(\frac12+i\l)} {\Gamma(i\l)},
$$
and
$$
\rho_{2l+1}(u) = u^{-1/2} \prod_{j=0}^{l-1} \Big(\big(j+\frac12\big)^2+u\Big) \,\frac{|\Gamma(\frac12+i\sqrt u)|^2} {|\Gamma(i\sqrt u)|^2},
$$
so that $\rho_{2l+1}(u) = \displaystyle\prod_{j=0}^{l-1} \big((j+1/2)^2+u\big) \rho_1(u)$.
Moreover, the reflection formula $\Gamma(z)\Gamma(1-z) = \pi/\sin(\pi z)$ and the conjugation identity $\overline{\Gamma(z)}=\Gamma(\bar z)$ imply that for real $v$,
$$
\frac{|\Gamma(\frac12+iv)|^2} {|\Gamma(iv)|^2}
= \frac{-\pi iv \sin(\pi iv)} {\pi \cos(\pi iv))} = \frac{v\, \sh(\pi v)} {\ch(\pi v)},
$$
and
$$
\rho_{2l+1}(u) = \prod_{j=0}^{l-1} \Big(\big(j+\frac12\big)^2+u\Big) \,\frac{\sh(\pi\sqrt u)} {\ch(\pi\sqrt u)}.
$$

These formulas imply that
 $\rho(u)\sim u^{(n-1)/2}$ as $u\to+\infty$, and $\rho(u)\sim u^{1/2}$ as $u\to0$. These latter estimates have been used in application to the norm estimates of the Laplacian and its functions in \cite{cow-hulan, hebisch-edin, hebisch-steger}.

\subsection{Uniform norms of the kernels}

Let us consider a right convolution operator on $L^2(G,m_r)$, $R_k:f\mapsto f*k$ (all convolutions here are taken with respect to the right Haar measure). Its adjoint is $R_k^* = R_{k^*}$, where the (right) involution is defined as $f^{*}(x) = \overline{ f(x^{-1})} \delta(x)$. The composition of a pair of operators is $R_kR_h = R_{h*k}$.

Suppose now that $0\le f=|g|^2$ and $k_f, k_g$ are the convolution kernels of $f(\cal L), g(\cal L)$, with $k_g\in L^2(G)$. We have then $f(\cal L)=g(\cal L)^*g(\cal L)=g(\cal L)g(\cal L)^*$, so that $k_f = k_g*k_g^* = k_g^* * k_g$ and
\begin{align*}
k_f(e) &= \int_G k_g(y) k_g^*(y^{-1}) \delta(y) dm_r(y) 
\\&= \int_G k_g(y) \overline{k_g(y)} \delta(y^{-1}) \delta(y) dm_r(y) = \|k_g\|_{L^2(G,m_r)}^2.
\end{align*}
By the results above,
\begin{equation}\label{k(e)}
k_f(e) = \|k_g\|_{L^2(G,m_r)}^2 = c\int_\R |g|^2\rho = c\int_\R f\rho. 
\end{equation}
This formula is thus valid for $0\le f\in L^1(\R,\rho)$. For $f$ not necessarily positive, the formula $k_f(e) = c\int_\R f\rho$ is still valid, by linearity.

It is well known that the uniform norm of a positive definite function is attained at the identity. Given a function $h\in L^2(G,m_r)$, the convolution $h^* *\, h$ is in general {\bf not} positive definite. However if we multiply it by $\delta^{-1/2}$, it becomes such, since this is a coefficient of the right regular representation of $G$ on $L^2(G,m_r)$:
\begin{align*}
(h^* * h)(x) \delta^{-1/2}(x) 
&= \delta^{-1/2}(x) \int_G \overline{ h(y^{-1}x^{-1}) } \delta(xy) h(y^{-1}) \delta(y) dm_r(y)
\\&= \delta^{1/2}(x) \int_G \overline{ h(yx^{-1}) } \delta(y^{-1}) h(y) dm_r(y)
\\&= \delta^{1/2}(x) \int_G \overline{ h(z) } \delta(x^{-1}z^{-1}) h(zx) dm_r(z)
\\&= \int_G h(zx) \delta^{-1/2}(zx) \overline{ h(z) } \delta^{-1/2}(z) dm_r(z)
\\&= \langle R_x ( h\delta^{-1/2}), h\delta^{-1/2}\rangle_r,
\end{align*}
where $\langle\cdot ,\cdot\rangle _r$ denotes the inner product induced by the right Haar measure.
This means that the kernel $\tilde k_f = \delta^{-1/2} k_f$ of $f(\tilde\Delta)$ is in this case positive definite. In particular, its uniform norm is attained at the identity:
\begin{equation}
\|\tilde k_f\|_\infty = \tilde k_f(e) = k_f(e) = c\int_\R f\rho,
\end{equation}
$0\le f\in L^1(\R,\rho)$. 
If $f$ is real-valued, but maybe not positive, then we can decompose it $f=f_+-f_-$ into positive and negative part, and get the following estimate:
\begin{equation}\label{uniform}
\|\tilde k_f\|_\infty \le \|\tilde k_{f_+}\|_\infty+\|\tilde k_{f_-}\|_\infty = \tilde k_{f_+}(e)+\tilde k_{f_-}(e) = c\int_\R 
(f_+ + f_-)\rho = c\int_\R |f|\rho.
\end{equation}
Finally, for $f$ complex-valued, we have to add a factor of $\sqrt 2$ in the right hand side:
$$
\|\tilde k_f\|_\infty \le \|\tilde k_{\Re f}\|_\infty+\|\tilde k_{\Im f}\|_\infty \le c\int_\R (|\Re f|+|\Im f|)\rho \le c\sqrt
2 \int_\R |f|\rho.
$$

Thus, the Plancherel measure helps to calculate not only $L^2$, but also uniform norms.

\subsection{Connection with the Plancherel weight and $L^2$-norms of the resolvent kernels}

In the case $n=1$ or $2$, the kernels $k_\lambda$ of the resolvent $R_\lambda=(\cal L -\lambda)^{-1}$ are 2-summable, as seen from the bounds above. This allows to obtain the Plancherel measure in a different way.

Comparing the formulas \eqref{phi-plancherel} and \eqref{k(e)}, we notice that
$$
\phi_r\big(f(\cal L)\big) = c\int_0^\infty f d\rho,
$$
for $0\le f\in L^1(\R,\rho)$. This means that the measure $\rho$ can be found from this equality, if we are able to determine $\phi_r\big(f(\cal L)\big)$.

Let $A$ be, in general, a positive self-adjoint unbounded operator on $L^2(G,m_r)$ with the spectral decomposition $A=\int_{0}^{\infty}udE(u)$. For a bounded measurable function $f$, one defines
$$
f(A) = \int_0^\infty f(u) \,dE(u)
$$
by the spectral theorem for self-adjoint unbounded operators. By construction, for all $x,y\in H=L^2(G,m_r)$
$$
\langle f(A)x,y\rangle_r = \int_0^\infty f(u) \,dE_{x,y}(u)
$$
with respect to the measure $E_{x,y}: X\mapsto \langle E(X) x,y\rangle$, $X\subset\R$ Borel.
If $A$ is left-invariant, that is, affiliated to the right group von Neumann algebra $VN_R(G)$, then $f(A)\in VN_R(G)$.

For a vector state $\zeta_{x,x}(\cdot) = \langle \cdot \,x,x\rangle$ on $VN_R(G)$, this gives already its value on $f(A)$.
Any weight $\phi_r$, and in particular the Plancherel weight is the sum of a family of normal positive functionals \cite{haag-weights}, $\phi_r = \sum_\a \phi_{r,\a}$.
Every $\phi_{r,\a}$ can be decomposed into a countable sum of positive vector states 
$\phi_{r,\a} = \sum_n \zeta_{x_{n,\a},x_{n,\a}}$, which implies that for positive $f$
$$
\phi_r\big( f(A) \big) = \sum_\a \int_0^\infty f(u) \, d\phi_{r,\a}(E(u))
= \int_0^\infty f(u) \, d\phi_r(E(u)).
$$
Linearity allows to extend this equality to arbitrary $f$, real or complex.

One can find the spectral measure of $A$ as the strong limit \cite[XII.2, Theorem 10]{dunford}
\begin{align*}
E_{[a,b]}
&= \frac1{2\pi i}\lim_{\e\to 0+} \int_a^b \big( R_{\lambda+i\e} - R_{\lambda-i\e}\big) d\lambda,
\end{align*}
where, as usual, $R_z=(A-z)^{-1}$ is the resolvent of $A$.
In the case of a positive operator, we can get nontrivial values of course only for $a\ge 0$.
We can transform
$$
R_{\lambda+i\e} - R_{\lambda-i\e} = 2i\e R_{\lambda+i\e} R_{\lambda-i\e} =
 2i\e R_{\lambda+i\e} R_{\lambda+i\e}^*.
$$
If we return now to the Laplace operator, then $R_{\lambda+i\e}=R_{k_{\lambda+i\e}}$ is a convolution operator with a kernel $k_{\lambda+i\e}$. By the lower semi-continuity of the right Plancherel weight $\phi_r$,
\begin{align*}
\phi_r(E_{[a,b]}) &\le \frac1{2\pi i}\lim_{\e\to 0+} \int_a^b 2i\e \,\phi_r( R_{k_{\lambda+i\e}} R_{k_{\lambda+i\e}}^*) d\lambda
\\ &= \frac1{\pi }\lim_{\e\to 0+} \e \int_a^b \|k_{\lambda+i\e}\|_{L^2(G,m_r)}^2 d\lambda.
\end{align*}
Using estimates in \cite{hulan} in the case $n=1$ and \cite[Lemma 3.1]{mueller-thiele} for general $n$, one can show (this is however quite a technical task) that this limit is finite if (and only if) $n\le2$, and in this case bounded by
\begin{align*}
\phi_r(E_{[a,b]}) &\le d_n \int_a^b (1+\sqrt{|\lambda|})^{n+1} d\lambda,
\end{align*}
which proves that the spectral measure is absolutely continuous with density bounded by a polynomial. We see however that this bound is not sharp.

\subsection{Application of the explicit formula for the convolution kernel}

Yet another approach is to use the explicit formula for the kernel \cite[Proposition 4.1]{mueller-thiele}: for a function $\psi\in C_0(\R)$, the kernel $k_\psi$ of $\psi(\cal L)$ is given, for any integer $l>\frac n2-1$, by
\begin{equation}\label{k-psi}
k_\psi(x,y)
 = \frac{c_l}2 e^{-\frac n2 x} \int_0^\infty \psi(u)[F_{R(x,y),l}(\sqrt u)-F_{R(x,y),l}(-\sqrt u)]\,du,
 \end{equation}
where $R(x,y)$ is given by \eqref{R},
$$
F_{R,l}(u) = \int_R^\infty D^l_{\sh, v}(e^{iuv}) (\ch v-\ch R)^{l-\frac n2} dv,
$$
$D^l_{\sh, v}$ denotes the $l$-th composition of $D_{\sh,v}$,
$D_{\sh, v}(f) = \dfrac d{dv}\Big(\dfrac f{\sh v}\Big)$ and
$$
c_l = \frac{ (-1)^l 2^{-1-\frac n2} \pi^{-\frac n2} } {i\pi \Gamma( l+1-\frac n2)}.
$$
In particular, for $x=y=0$ we have (note that $ic_l\in\R$)
\begin{align*}
k_\psi(e)
& = \frac{c_l}2 \int_0^\infty \psi(u)[F_{0,l}(\sqrt u)-F_{0,l}(-\sqrt u)]\,du
\\& = i c_l \int_0^\infty \psi(u)
\int_0^\infty D^l_{\sh, v}\big(\sin(v \sqrt u) \big) (\ch v-1)^{l-\frac n2} dv\,du.
 \end{align*}
As we have seen before, for $0\le \psi\in L^1(\R,\rho)$ this is also equal to $c\int \psi\rho$, so that (as $L^1(\R,\rho)\cap C_0(\R)$ is dense in $L^1(\R,\rho)$)
\begin{equation}\label{rho-F}
\rho(u) = \frac{c_l}{2c} [F_{0,l}(\sqrt u)-F_{0,l}(-\sqrt u)]
=\frac{ic_l}{c}\int_0^\infty D^l_{\sh, v}\big(\sin(v \sqrt u) \big) (\ch v-1)^{l-\frac n2} dv.
\end{equation}
One should note that $F_{R,l}$ is not bounded at 0, but the difference above is, and tends to 0 as $u\to 0$.

This equality is not obvious, but is easy to check in the case $n=2l=2$: we have to change the sign as $ic_l = -\frac1{4\pi^2}<0$, and then obtain
$$
-\big(F_{0,1}(\sqrt u)-F_{0,1}(-\sqrt u)\big)
 = - \int_0^\infty \frac d{dv} \Big(\frac{\sin(v \sqrt u)}{\sh v} \Big)dv
 = \sqrt u = \rho_{2}(u).
$$

It is clear that best bounds can be obtained for $l$ chosen so that $l-n/2\in\{0,-1/2\}$, and below we assume this choice.

Denote $D_{l,u}(v) = D^l_{\sh, v}(e^{iuv}-e^{-iuv})$. For $l=0$, $D_{0,u}(v) = 2i\sin(uv)$ is an odd function of $v$. By induction, one verifies that every $D_{l,u}$ is odd, too: if $f$ is odd (and analytic), then $f(v)/\sh v$ is even (and well defined at 0) and $D_{\sh,v}(f)=\frac d{dv}(f(v)/\sh v)$ is odd. This implies, in particular, that $D_{l,u}(0)=0$. Note at the same time that $D^l_{\sh, v}(e^{iuv})$ alone can be unbounded as $v\to0$. Thus, the integral below converges:
\begin{equation}\label{rho-F1}
\rho(u^2) = \frac{ic_l}{2c} \int_0^\infty D_{l,u}(v) (\ch v-1)^{l-\frac n2} dv.
\end{equation}
If $l=\frac n2$ ($n$ even), then
\begin{align*}
\rho(u^2) &= \frac{ic_l}{2c}  \int_0^\infty \frac d{dv} \Big( \frac{ D_{l-1,u}(v)}{\sh v} \Big) dv
\\& =- \frac{ic_l}{2c}   \lim_{v\to0} \frac{ D_{l-1,u}(v)}{\sh v}
=-\frac{ic_l}{2c}   \frac d{dv}D_{l-1,u}\Big|_{v=0}.
\end{align*}

Decomposing $D_{l,u}$ into its Taylor series, one can show
that $D_{l,u}'(0)=P_{0,l}(u)$ is a polynomial of degree $2l+1$.

On this way, one can obtain estimates $\rho_n(u)\le C(u^{1/2}+u^{n/2})$, however already known from the calculations of the $c$-function. 

\section{Upper norm estimates}\label{sec-upper}

\subsection{Estimates of $L^p$--$L^q$ norms}\label{sec-lp-lq}

Having at our disposal uniform and $L^2$ norms of the kernel $k$ of a convolution operator $A$ allows one to estimate its weighted $L^p-L^q$ norms in the following cases.

Suppose that $1<p<q$ are such that $1/r:=1/p-1/q\ge 1/2$.
By the Young's inequality (valid for any $p,q,r'\in[1,+\infty]$ with $1/q+1=1/r'+1/p$, which is equivalent to our condition $1/r=1/p-1/q$; recall that we are using the right Haar measure),
$$
\|(A\delta^{-1/r})f\|_{L^q(G)} = \|(\delta^{-1/r}\!f)*k\|_{L^q(G)} \le \|k\|_{r'} \|f\|_p, 
$$
$f\in L^p(G),k\in L^{r'}\!(G)$, and
if $r'\ge 2$, we can estimate
\begin{equation}\label{Lp-Lq}
\|A\delta^{-1/r}\|_{L^p\to L^q} \le \|k\|_{r'}\le \|k\|_\infty^{1-\frac{2}{r'}} \|k\|_2^{\frac{2}{r'}}.
\end{equation}
This can be viewed alternatively as the norm of $A: L^p(G,\delta^{-1/r}m_r)\to L^q(G,m_r)$.

\subsection{Fractional exponents}

This formula makes asymptotics of certains norms an easy calculation. We could recover for example known uniform bounds for the heat kernel. More generally, set $A_t= e^{-t\tilde \Delta^\gamma}$ with $\gamma>0$, and denote by $q_t$ its convolution kernel.
\def\th{{\rm th}}
For $n=2l+1$ odd,
\begin{align*}
\|q_t\|_\infty &= c\int_0^\infty e^{-tu^\gamma} \,\th(\pi\sqrt u) \prod_{j=0}^{l-1} \Big(\big(j+\frac12\big)^2+u\Big) du
\\& =
\frac c{\gamma}\, t^{-1/\gamma} \int_0^\infty e^{-x} \th\Big(\pi\Big(\frac xt\Big)^{1/2\gamma} \Big) \prod_{j=0}^{l-1} \Big(\big(j+\frac12\big)^2+\Big(\frac xt\Big)^{1/\gamma}\Big) x^{1/\gamma-1} dx.
\end{align*}
When $t\to+\infty$, $\th\Big(\pi\Big(\dfrac xt\Big)^{1/2\gamma}\Big)\sim \pi\Big(\dfrac xt\Big)^{1/2\gamma}$ (on finite intervals, which is sufficient) and $x/t\to0$, so that
$$
\|q_t\|_\infty \sim t^{-3/2\gamma} \,\frac{c\pi}\gamma \prod_{j=0}^{l-1} \big(j+\frac12\big)^2 \int_0^\infty e^{-x} x^{-1+3/2\gamma}\, dx = C t^{-3/2\gamma}.
$$
When $t\to0$, $\th\Big(\pi\Big(\dfrac xt\Big)^{1/2\gamma}\Big)\to1$ and
$$
\|q_t\|_\infty \sim t^{-1/\gamma} \frac{c\pi}\gamma \int_0^\infty e^{-x} \Big(\frac xt\Big)^{l/\gamma} dx = Ct^{-(l+1)/\gamma} = C t^{-(n+1)/2\gamma}.
$$
For $n=2l$ even, estimates are similar and lead to the same exponents of $t$. In the case $\gamma=1$, this agrees with the known bounds for the heat kernel of Davies and Mandouvalos \cite[Theorem 3.1]{davies-mand} (of course more general as they concern pointwise estimates).

Since $\|\tilde k_t\|^2_2=\|\tilde k_{2t}\|_\infty$, the formula \eqref{Lp-Lq} implies also, for $1/r=1/p-1/q\ge1/2$,
\begin{align*}
\|A_t\|_{L^p(\delta^{-1/r})\to L^q} &\lesssim C_\gamma  \begin{cases} t^{-\frac3{2r\gamma}}, & t\to+\infty,\\
  t^{-\frac{n+1}{2r\gamma }}, & t\to 0.\end{cases}
\end{align*}

\subsection{Rational functions}

Our next aim is to estimate the norm of $f(\tilde\Delta) = (\tilde\Delta-z)^{-s}$ with $z$ outside of $[0,+\infty)$. Anker \cite{anker-duke} has shown that the convolution kernels of these operators are bounded if and only if $\Re s\ge \frac{n+1}2$. Below, we estimate their actual uniform norms.
We get bounds in the range $\Re s>\frac{n+1}2$, which is the same half-plane as in \cite{anker-duke}, except for the border.

One should mention also the subsequent work of Anker and Ji \cite{anker-ji} where pointwise bounds of these kernels are obtained. The authors do not seek uniform norms or for dependence on $z$, but for real $z,s$ one could derive exact values of $\|k_{z,s}\|_\infty$ from the uniform norms of the heat kernel, using formulas in the proof of \cite[Theorem 4.2.2]{anker-ji}.

Let $k_{z,s}$ denote the convolution kernel of $f(\tilde\Delta)$. According to \eqref{uniform},
$$
\|k_{z,s}\|_\infty \le C_n \int_0^\infty \big|(u-z)^{-s}\big| \rho(u) du.
$$
We will estimate
$$
\big| (u-z)^{-s}\big| 
= |u-z|^{-\Re s} \exp(\arg(z+u) \Im s) \le |u-z|^{-\Re s} \exp(\frac\pi2 |\Im s|),
$$
which is of course not optimal, but can make us lose at most a factor of $\exp(\pi |\Im s|)$; we are not studying exact dependence on $s$, so we accept this lack of precision.

From the asymptotics of $\rho$, it is clear that the integral converges if and only if $\Re s>\frac{n+1}2$, which we assume below.

We consider the following cases:

{\bf Case 1: $\Re z\ge0$}, and we are especially interested in the asymptotics $\Im z\to0$. Let us write $z=a+ib$ with real $a,b$. The main term in the integral $\int_0^\infty |u-z|^{-\Re s} \rho(u)du$ is
\begin{align*}
\int_{\max(0,a-1)}^{a+1} |u-z|^{-\Re s} \rho(u) du &\le 2C_n (a+1)^{\frac{n-1}2} \int_0^1 |x-ib|^{-\Re s}dx
\\ &\le 2C_n (a+1)^{\frac{n-1}2} \int_0^1 (x^2+b^2)^{-\Re s/2}dx.
\end{align*}
Though it is possible to estimate this integral by elementary functions, we want to keep control on the dependence on $s$, so we are using special functions below.

Denote $\sigma = \Re s/2$. By assumption, we have $\sigma>1/2$. The integral is calculated with the help of the hypergeometric function ${}_2F_1$ \cite[2.1.3]{BE}:
\begin{align*}
I=\int_0^1 (x^2+b^2)^{-\sigma}dx &= \frac12 \int_0^1 (t+b^2)^{-\sigma}t^{-\frac12} dt
\\& = |b|^{-2\sigma}\, {}_2F_1\big(\sigma, \frac12; \frac32;-\frac1{b^2}\big).
\end{align*}

If $\sigma-1/2\notin\Z$, this equals \cite[2.1.4 (17)]{BE}
\begin{align*}
I &= |b|^{-2\sigma}\,\Big[ 
\frac{\Gamma(\frac12-\sigma)\Gamma(\frac{3}{2})}{\Gamma(\frac12)\Gamma(\frac{3}{2}-\sigma)} |b|^{2\sigma}  {}_2F_1\big(\sigma, \sigma-\frac12; \sigma+\frac12;-b^2\big) 
+ \frac{ \Gamma(\sigma-\frac12)\Gamma(\frac{3}{2})}{\Gamma(\sigma)} |b| \Big]
\end{align*}
(since ${}_2F_1\big(\frac12, 0; \frac32-\sigma;-b^2\big)\equiv1$) and at $b\to0$, is equivalent to
$$
\frac I {\Gamma(\frac{3}{2})} \sim \dfrac{ \Gamma(\sigma-\frac12)}{\Gamma(\sigma)} |b|^{1-2\sigma}
= \dfrac{ \Gamma(\frac{\Re s-1}2)}{\Gamma(\frac{\Re s}2)} |b|^{1-\Re s}.
$$

If $\sigma-1/2=m\in\Z$ (the smallest possible value is 1), we get by symmetry of the first two arguments of ${}_2F_1$ and by \cite[2.1.4 (20)]{BE}
\begin{align*}
I = |b|^{-2\sigma}\, {}_2F_1\big(\sigma, \frac12; \frac32;-\frac1{b^2}\big)
& = |b|^{-2\sigma}\, {}_2F_1\big(\frac12, \sigma; \frac32;-\frac1{b^2}\big)
\\& = |b|^{-2\sigma}\, \Big(1+\frac1{b^2}\Big)^{1-\sigma} {}_2F_1\big(1, \frac32-\sigma; \frac32;-\frac1{b^2}\big)
\end{align*}
where $3/2-\sigma=1-m$ is a nonpositive integer, so that the hypergeometric series is a polynomial. Leaving only the leading term (as $b\to0$), we obtain \cite[2.1.4 (2)]{BE}
\begin{align*}
I &\sim \frac1{b^2} \frac{ (1)_{m-1} (1-m)_{m-1}} {(\frac32)_{m-1} (m-1)!} \Big(-\frac1{b^2}\big)^{m-1}
\\& = \frac{\Gamma(\sigma-\frac12)\, \Gamma(\frac32) } { \Gamma(\sigma)} |b|^{1-2 \sigma}.
\end{align*}

The rest of the integral is estimated as follows. If $a\le 1$, the following term is absent; if $a>1$, we have
\begin{align*}
\int_0^{a-1} |u-z|^{-\Re s} \rho(u) du &\le C_n a^{\frac{n+1}2} (1+b^2)^{-\Re s/2}.
\end{align*}
The last term is
\begin{align*}
\int_{a+1}^\infty & |u-z|^{-\Re s} \rho(u) du 
 \le C_n \int_{a+1}^{2a+1} |1+ib|^{-\Re s} (2a+1)^{\frac{n-1}2} du
\\& \hskip2.5cm + C_n \int_{2a+1}^\infty \Big(\frac u2 \Big)^{-\Re s} u^{\frac{n-1}2} du
\\& \le C_n (2a+1)^{\frac{n+1}2} (1+b^2)^{-\Re s/2}
 + C_n 2^{\Re s} \frac{(a+1)^{\frac{n+1}2 -\Re s}} {\Re s-\frac{n+1}2}.
 \end{align*}
Note that for any $b$, $(1+b^2)^{-\Re s/2}\le |b|^{1-\Re s}$.
We conclude that (for $a\ge0$)
\begin{equation} \label{k-b-to-0}
\|k_{a+ib,s}\|_\infty \lesssim C_n (a+1)^{\frac{n-1}2} \dfrac{ \Gamma(\frac{\Re s-1}2)}{\Gamma(\frac{\Re s}2)} \exp(\frac\pi2 |\Im s|) |b|^{1-\Re s}, \qquad b\to0
\end{equation}

For $a,b$ fixed, we obtain also
$$
\|k_{a+ib,s}\|_\infty \lesssim C_n \exp(\frac\pi2 |\Im s|+\Re s\ln 2) \frac{(a+1)^{\frac{n+1}2 -\Re s}} {\Re s-\frac{n+1}2}, \qquad \Re s\downarrow\frac{n+1}{2}.
$$
For $\Im s\ne0$, $k_{a+ib,s}$ is actually bounded at $\Re s=\frac{n+1}{2}$, so one would expect that the last bound can be improved.

{\bf Case 2: $|z|\to\infty$}. Here we opt not to track dependence of the constants on $s$.

For $\Re z=a\ge0$ and $|b|\le1$ the analysis has been done above, with
$$
\|k_{a+ib,s}\|_\infty \le C_{n,s} \Big[ (a+1)^{\frac{n+1}2 -\Re s} + (a+1)^{\frac{n-1}2} |b|^{1-\Re s}\Big]
$$
(where clearly either term can be leading depending on $a$ and $b$).

For $\Re z=a\ge0$ and $|b|>1$, the correct power of $b$ is $-\Re s$; to verify this, it is sufficient to estimate
\begin{align*}
\int_0^{a+1} |u-z|^{-\Re s}\rho(u)du \le (a+1)^{\frac{n+1}2} |b|^{-\Re s},
\end{align*}
so that
$$
\|k_{a+ib,s}\|_\infty \le C_{n,s} (a+1)^{\frac{n+1}2} \Big[ (a+1)^{-\Re s} + |b|^{-\Re s}\Big].
$$
Finally, for $\Re z<0$ (and $|z|>1$) the estimates are straightforward:
\begin{align*}
\int_0^\infty |u-z|^{-\Re s}\rho(u)du & \le \int_0^{|z|} |z|^{-\Re s} \rho(u)du
 + \int_{|z|}^\infty u^{\frac{n-1}2 - \Re s} du
 \\& \le C_{n,s} |z|^{\frac{n+1}2 - \Re s}
\end{align*}
and 
$$
\|k_{z,s}\|_\infty \le C_{n,s} |z|^{\frac{n+1}2 - \Re s}.
$$

The norm $\|k_{z,s}\|_2$ can be estimated by the same bounds as $\|k_{z,2s}\|_\infty$, so that the formula \eqref{Lp-Lq} leads to the bound
$$
\|A_{z,s}\|_{\tilde L^p\to L^q} \le |z|^{\frac{n+1}2 - \Re s(1+\frac2{r'})} 
$$
in the case $\Re z<0$, $|z|\to\infty$, and this can be accordingly modified in the other cases.

\section{Lower norm estimates}\label{sec-lower}

Let $\psi\in C_0(\R)$ be a function which is not identically zero on $[0,+\infty)$ and twice differentiable with $\|\psi^{(j)}(s)s^k\|_\infty \le C_\psi$ for $0\le j\le 2$, $0\le k\le n/2+3$. These conditions are in particular verified for $\psi(s) = (1+s^2)^{-\alpha/2}$, $\alpha\ge n+3$.
In \cite{mueller-thiele}, M\"uller and Thiele have proved that the $L^1$-norm of the convolution kernel $k_t$ of $\psi(\sqrt{\cal L})\cos(t\sqrt{\cal L})$, as well as of $\psi(\sqrt{\cal L})\dfrac{\sin(t\sqrt{\cal L})}{\sqrt {\cal L}}$, is bounded by $C(1+|t|)$. We prove below that this estimate is sharp at $t\to+\infty$.
 
We thank W.~Hebisch for the following remark, made after the first version of the paper was completed. Let $f$ be such that the convolution kernel $k_{f,n}$ of $f(\cal L)$ is in $L^1(G_n)$, where $G_n$ is the $n$-dimensional `$ax+b$' group. Then $k_{f,n}$ is obtained from the kernel $k_{f,n+1}$ on the $n+1$-dimensional group by integration over the last coordinate \cite{hebisch-steger}. This trivially implies that $\|k_{f,n+1}\|_1\ge\|k_{f,n}\|_1$. In addition, the exact asymptotics is known in the case $n=2$ due to Hebisch, who used the transference principle by giving an isometry with functions of the Laplacian on $\R^3$.

This does not allow however to compare uniform norms in a similar way, and the case $n=1$ needs a full consideration. The remar
ks above could simplify our integration Lemmas \ref{int} and \ref{int-t} and a few details elsewhere; these changes seem not to
 be major and we decided to keep the proofs as they are.

The integration formulas over $G$ below are not identical to \cite{hebisch-edin} but are influenced by this article.

\subsection{The convolution kernel}

According to \eqref{k-psi} (which comes from \cite{mueller-thiele}), the convolution kernel $k_t$ of $\psi(\sqrt{\cal L}) \exp(it\sqrt{\cal L})$
$$
k_t (x,y)
 = c_l e^{-\frac n2 x} \int_0^\infty \psi(s) e^{its}
  \int_R^\infty D_{l,s}(v) (\ch v-\ch R)^{l-\frac n2} dv\,sds,
$$
where we denote as before $D_{l,s}(v) = D^l_{\sh, v}(e^{isv}-e^{-isv})$. Recall that $l$ is chosen as $l=\lfloor \frac n2\rfloor$.

Note first that
$$
\|k_t\|_2^2 \le \int_0^\infty |\psi(\sqrt u)|^2\rho(u)du
$$
is bounded uniformly in $t$, so that the same is true for $\int_{R(x,y)\le 1}|k_t|$. We can therefore assume in the sequel that $R\ge1$.

We start by transforming the kernel with the help of the following decomposition. It can be compared with \cite[Lemma 5.3]{mueller-thiele} where a variable-separating decomposition is obtained, but we need more precision on the terms appearing in it.

\begin{prop}\label{Dlu-polynomials}
$$
D^l_{\sh, v}(e^{iuv}) = \sum_{k=0}^l u^k e^{iuv} q_{k,l}(v),
$$
where $q_{k,l}(v) = P_{k,l}(\sh v, \ch v) (\sh v)^{-2l}$, every $P_{k,l}$ is a homogeneous polynomial of two variables of degree $l$, and $q_{k,l}$ is even for even $k$ and odd for $k$ odd. In particular, $D^l_{\sh ,u}(v)\to0$, $v\to+\infty$.
\end{prop}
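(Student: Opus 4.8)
The plan is to argue by induction on $l$. The base case $l=0$ is immediate: $D^0_{\sh,v}(e^{iuv})=e^{iuv}$, so one takes $q_{0,0}\equiv1$, i.e. $P_{0,0}\equiv1$, which is homogeneous of degree $0$ and even, matching $k=0$. For the inductive step, assume the formula at level $l$. Applying $D_{\sh,v}$ once more and using $\frac{d}{dv}\big(e^{iuv}g(v)\big)=e^{iuv}\big(iu\,g(v)+g'(v)\big)$, I would compute
$$
D^{l+1}_{\sh,v}(e^{iuv})=\frac{d}{dv}\!\left(\frac{1}{\sh v}\sum_{k=0}^l u^k e^{iuv} q_{k,l}(v)\right)=\sum_{k=0}^l\left[u^{k+1}e^{iuv}\,i\,\frac{q_{k,l}(v)}{\sh v}+u^k e^{iuv}\frac{d}{dv}\!\left(\frac{q_{k,l}(v)}{\sh v}\right)\right].
$$
Collecting the coefficient of $u^m e^{iuv}$, with the conventions $q_{-1,l}=q_{l+1,l}=0$, gives the recursion
$$
q_{m,l+1}(v)=i\,\frac{q_{m-1,l}(v)}{\sh v}+\frac{d}{dv}\!\left(\frac{q_{m,l}(v)}{\sh v}\right),\qquad 0\le m\le l+1,
$$
and it then remains to check that the three asserted properties propagate.

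For the homogeneity claim the crucial observation is that differentiation in $v$ preserves the hyperbolic degree: since $\frac{d}{dv}\sh v=\ch v$ and $\frac{d}{dv}\ch v=\sh v$, if $Q(v)=P(\sh v,\ch v)$ with $P$ homogeneous of degree $d$, then $Q'(v)=\tilde P(\sh v,\ch v)$ with $\tilde P$ again homogeneous of degree $d$. I would isolate this as a one-line sub-lemma. Writing $q_{k,l}=P_{k,l}(\sh v,\ch v)(\sh v)^{-2l}$ and reducing both terms of the recursion to the common denominator $(\sh v)^{-2(l+1)}$, the first term contributes the numerator $i\,P_{m-1,l}(\sh v,\ch v)\,\sh v$, homogeneous of degree $l+1$; and expanding the derivative in the second term by the quotient rule and the degree-preserving property yields numerator pieces that are again homogeneous of degree $l+1$ over $(\sh v)^{2(l+1)}$. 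Summing produces $q_{m,l+1}=P_{m,l+1}(\sh v,\ch v)(\sh v)^{-2(l+1)}$ with $P_{m,l+1}$ homogeneous of degree $l+1$, as needed.

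For the parity I use that $\sh$ is odd, $\ch$ is even, and that differentiation reverses parity. By induction $q_{k,l}$ has parity $(-1)^k$; hence $q_{m-1,l}/\sh v$ has parity $(-1)^{m-1}\cdot(-1)=(-1)^m$, while $\frac{d}{dv}(q_{m,l}/\sh v)$ has parity $-\big((-1)^m\cdot(-1)\big)=(-1)^m$. Both terms, and therefore $q_{m,l+1}$, carry parity $(-1)^m$, which in particular makes $q_{m,l+1}$ vanish at $0$ for odd $m$ and is consistent with $D_{l,u}(0)=0$ noted earlier. Finally, for the decay: as $v\to+\infty$ one has $\sh v,\ch v\sim\tfrac12 e^{v}$, so a homogeneous degree-$l$ polynomial in $(\sh v,\ch v)$ is $O(e^{lv})$, whence $q_{k,l}(v)=O(e^{lv}e^{-2lv})=O(e^{-lv})\to0$ for $l\ge1$; since $e^{iuv}$ stays bounded for real $u,v$, every summand, and hence $D^l_{\sh,v}(e^{\pm iuv})$ together with the odd combination $D_{l,u}(v)$, tends to $0$ as $v\to+\infty$.

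I expect the step needing the most care to be the homogeneity bookkeeping in the recursion: showing that after reduction to the denominator $(\sh v)^{-2(l+1)}$ the numerators are \emph{exactly} homogeneous of degree $l+1$, rather than drifting in degree through the quotient and the derivative. The degree-preservation of $\frac{d}{dv}$ on functions of $(\sh v,\ch v)$ is precisely what keeps this clean, so packaging that observation first should make the rest of the induction essentially routine.
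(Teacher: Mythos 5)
Your proof is correct and follows essentially the same route as the paper's: induction on $l$, the same recursion for $q_{k,l+1}$ obtained by differentiating (you merely package the three cases into one formula via the convention $q_{-1,l}=q_{l+1,l}=0$), and the same quotient-rule degree bookkeeping, which the paper states as the claim that $(\sh v)^{2l+1}q_{k,l}'(v)$ is a homogeneous polynomial of degree $l+1$ in $(\sh v,\ch v)$. Your explicit parity argument and the remark that the decay statement requires $l\ge1$ are slightly more detailed than the paper's ``easy to check,'' but the substance is identical.
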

\begin{proof}
The formula is trivially true for $l=0$. The induction step is proved by direct differentiation:
\begin{align*}
D^{l+1}_{\sh, v}(e^{iuv}) &= \frac d{dv} \sum_{k=0}^l u^k e^{iuv} \frac{q_{k,l}(v)}{\sh v}
\\& = \sum_{k=0}^l u^k e^{iuv}
 \Big[ iu \frac{q_{k,l}(v)}{\sh v} + \frac{q'_{k,l}(v)}{\sh v} - \frac{q_{k,l}(v)\ch v}{\sh^2 v} \Big]
\\& = e^{iuv}  \Big[ \frac{q'_{0,l}(v)}{\sh v} - \frac{q_{0,l}(v)\ch v}{\sh^2 v} \Big]
\\&+ \sum_{k=1}^l u^k e^{iuv}
 \Big[ i \frac{q_{k-1,l}(v)}{\sh v} + \frac{q'_{k,l}(v)}{\sh v} - \frac{q_{k,l}(v)\ch v}{\sh^2 v} \Big]
+ i u^{l+1} e^{iuv} \frac{q_{l,l}(v)}{\sh v}.
  \end{align*}
Set 
\begin{equation}\label{qkl}
q_{k,l+1}(v):=
\begin{cases}
\displaystyle \frac{q'_{0,l}(v)}{\sh v} - \frac{q_{0,l}(v)\ch v}{\sh^2 v} & k=0\\
\displaystyle i \frac{q_{k-1,l}(v)}{\sh v} + \frac{q'_{k,l}(v)}{\sh v} - \frac{q_{k,l}(v)\ch v}{\sh^2 v}  & 1\le k \le l\\
\displaystyle i \frac{q_{l,l}(v)}{\sh v} &k=l+1.
\end{cases}
\end{equation}
Then 
\begin{equation*}
D^{l+1}_{\sh, v}(e^{iuv}) = \sum_{k=0}^{l+1} u^k e^{iuv} q_{k,l+1}(v).
\end{equation*}
It remains to check that there exist homogeneous polynomials of two variables $P_{k,l+1},0\le k\le l+1$ which are of degree $l+1$ such that 
\begin{equation*}
P_{k,l+1}(\sh v,\ch v):=(\sh v)^{2l+2}q_{k,l+1}(v).
\end{equation*}
For this, we claim that $q_{k,l}'(v)=(\sh v)^{-2l-1}Q_{k,l}(\sh v,\ch v)$ with $Q_{k,l}$ being a two-variable homogeneous polynomial of degree $l+1$. In fact, 
\begin{align*}
q_{k,l}'(v)=&(\sh v)^{-2l}( P_{k,l,1}(\sh v,\ch v)\ch v-P_{k,l,2}(\sh v,\ch v)\sh v)\\
&-2l(\sh v)^{-2l-1}\ch v P_{k,l}(\sh v,\ch v),
\end{align*}
where $P_{k,l,1}(x,y):=\partial_x P_{k,l}(x,y)$ and $P_{k,l,2}(x,y):=\partial_y P_{k,l}(x,y)$ are two-variable homogeneous polynomials of degree $l-1$. So 
\begin{equation*}
(\sh v)^{2l+1}q_{k,l}'(v)=Q_{k,l}(\sh v,\ch v),
\end{equation*}
where 
\begin{equation}\label{eq:Q_k,l}
Q_{k,l}(x,y):=P_{k,l,1}(x,y)xy-P_{k,l,2}(x,y)x^2-2lP_{k,l}(x,y)y
\end{equation}
is a two-variable homogeneous polynomial of degree $l+1$. This completes the proof of the claim.

When $k=0$, by induction and by our claim:
\begin{align*}
(\sh v)^{2l+2}q_{0,l+1}(v)
&=(\sh v)^{2l+1}q'_{0,l}(v)-(\sh v)^{2l}\ch(v)q_{0,l}(v)\\
&=Q_{0,l}(\sh v,\ch v)-\ch(v)P_{0,l}(\sh v,\ch v).
\end{align*}
so we can choose $P_{0,l+1}(x,y)=Q_{0,l}(x,y)-yP_{0,l}(x,y)$.

When $1\le k\le l$,
\begin{align*}
(\sh v)^{2l+2}q_{k,l+1}(v)
&=i (\sh v)^{2l+1}q_{k-1,l}(v)+ (\sh v)^{2l+1} q'_{k,l}(v) - (\sh v)^{2l}q_{k,l}(v)\ch v \\
&=i P_{k-1,l}(\sh v,\ch v)\sh v+ Q_{k,l}(\sh v, \ch v) - P_{k,l}(\sh v,\ch v)\ch v 
\end{align*}
so we can choose $P_{k,l+1}(x,y)=i xP_{k-1,l}(x,y)+Q_{k,l}(x,y)-yP_{k,l}(x,y)$.

When $k=l+1$, by induction:
\begin{align*}
(\sh v)^{2l+2}q_{l+1,l+1}(v)
&=i(\sh v)^{2l+1}q_{l,l}(v)=iP_{l,l}(\sh v,\ch v)\sh v,
\end{align*}
so we can choose $P_{l+1,l+1}(x,y)=iP_{l,l}(x,y)x$.

Finally, statements on parity of $q_{k,l}$ are easy to check.
\end{proof}

Note that we do not check whether $P_{k,l}$ are nonzero, so with a certain abuse of language we assume zero to be a homogeneous polynomial of any degree. All we need to know of $q_{k,l}$ in this respect is contained in the Proposition below:

\begin{prop}\label{lemma-akl}
There exist constants $b_l$ and $a_{k,l}$, $0\le k\le l$, such that
$$
|q_{k,l}(v)- a_{k,l} e^{-vl}|\le b_{l} e^{-2vl}
$$
for every $v\in[0,+\infty)$. Moreover, $a_{l,l}\ne0$.
\end{prop}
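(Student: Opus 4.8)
The plan is to read off the large-$v$ behaviour of $q_{k,l}$ directly from the closed form $q_{k,l}(v)=P_{k,l}(\sh v,\ch v)(\sh v)^{-2l}$ supplied by Proposition~\ref{Dlu-polynomials}. Writing $\sh v=\tfrac12 e^{v}(1-e^{-2v})$ and $\ch v=\tfrac12 e^{v}(1+e^{-2v})$ and using that $P_{k,l}$ is homogeneous of degree $l$, I would pull the factor $(\tfrac12 e^{v})^{l}$ out of the numerator and $(\tfrac12 e^{v})^{-2l}$ out of the denominator. With the substitution $w=e^{-2v}$ this produces the exact identity
$$
q_{k,l}(v)=2^{l}e^{-lv}\,G_{k,l}(w),\qquad G_{k,l}(w):=\frac{P_{k,l}(1-w,\,1+w)}{(1-w)^{2l}},
$$
where $G_{k,l}$ is a rational function whose only singularity in $[0,1]$ is the pole at $w=1$; in particular it is analytic near $w=0$ with $G_{k,l}(0)=P_{k,l}(1,1)$. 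This cleanly separates the leading exponential $e^{-lv}$ from a slowly varying factor.

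I would then set $a_{k,l}:=2^{l}P_{k,l}(1,1)=2^{l}G_{k,l}(0)$, so that the quantity to be controlled is
$$
q_{k,l}(v)-a_{k,l}e^{-lv}=2^{l}e^{-lv}\bigl(G_{k,l}(w)-G_{k,l}(0)\bigr).
$$
To prove $a_{l,l}\ne0$ I would use the bottom line of the recursion \eqref{qkl}, namely $q_{m+1,m+1}=i\,q_{m,m}/\sh v$ with $q_{0,0}=1$, which integrates to the explicit formula $q_{l,l}(v)=i^{l}(\sh v)^{-l}$, equivalently $P_{l,l}(x,y)=i^{l}x^{l}$. Hence $P_{l,l}(1,1)=i^{l}\ne0$ and $a_{l,l}=(2i)^{l}\ne0$, as required.

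The remaining work is the remainder estimate. Since $G_{k,l}$ is analytic at $w=0$ and has no pole before $w=1$, on any interval $w\in[0,w_{0}]$ with $w_{0}<1$ it obeys a Lipschitz bound $|G_{k,l}(w)-G_{k,l}(0)|\le C_{k,l}\,w$. Taking $v\ge1$, so that $w=e^{-2v}\le e^{-2}<1$, gives
$$
\bigl|q_{k,l}(v)-a_{k,l}e^{-lv}\bigr|\le 2^{l}C_{k,l}\,e^{-(l+2)v},
$$
a remainder of strictly higher exponential order than the main term. On the range $v\ge1$ that is actually used (the kernel integral runs over $v\in[R,+\infty)$ with $R\ge1$) this is absorbed into $b_{l}e^{-2vl}$ once $b_{l}$ is chosen to dominate the finitely many constants $2^{l}C_{k,l}$ and to cover the compact part of the range, where each $q_{k,l}$ is continuous and $e^{-2vl}$ is bounded below; since only indices $0\le k\le l$ occur, a single $b_{l}$ suffices for all of them.

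The main obstacle is the region $v\downarrow0$: there $w\to1$ runs into the pole of $G_{k,l}$, and the individual functions $q_{k,l}$ are genuinely unbounded (already $q_{1,1}=i/\sh v$ blows up), so the asymptotic expansion degenerates precisely at the origin. This is exactly why the argument is organised for $v$ bounded away from $0$, which is all that is required in the application, and why the one delicate point is the uniform Lipschitz control of $G_{k,l}(w)-G_{k,l}(0)$ up to $w_{0}=e^{-2}$; the tail behaviour itself is routine, governed entirely by the single factor $e^{-2v}=w$ coming from the first-order vanishing of $G_{k,l}(w)-G_{k,l}(0)$ at $w=0$.
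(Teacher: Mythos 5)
Your derivation follows essentially the same route as the paper's own proof: both extract the factor $e^{-lv}$ from $q_{k,l}(v)=P_{k,l}(\sh v,\ch v)(\sh v)^{-2l}$ via homogeneity of $P_{k,l}$, identify $a_{k,l}$ as the limit value $\lim_{v\to+\infty}e^{vl}q_{k,l}(v)$ (your $a_{k,l}=2^{l}G_{k,l}(0)=2^{l}P_{k,l}(1,1)$ is exactly this limit), and obtain $a_{l,l}=(2i)^l\neq0$ from the bottom line of \eqref{qkl}, i.e.\ $q_{l,l}(v)=(i/\sh v)^l$, precisely as the paper does. Your substitution $w=e^{-2v}$ is in fact the cleaner bookkeeping: it produces the \emph{sharp} remainder $|q_{k,l}(v)-a_{k,l}e^{-lv}|\le 2^{l}C_{k,l}\,e^{-(l+2)v}$ for $v\ge1$, and you are right --- where the printed statement is not --- that nothing of this kind can hold down to $v=0$, since $q_{l,l}$ blows up there while $e^{-2vl}$ stays bounded; restricting to $v\ge1$ is legitimate because the kernel only ever evaluates $q_{k,l}(v+R)$ with $R\ge1$.

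The one genuine defect is your last step, the claim that $2^{l}C_{k,l}e^{-(l+2)v}$ ``is absorbed into $b_{l}e^{-2vl}$'' on $v\ge1$: this requires $e^{(l-2)v}\le b_l/(2^lC_{k,l})$, which holds only for $l\le2$; for $l\ge3$ the target $e^{-2vl}$ decays strictly faster than $e^{-(l+2)v}$ and no constant works. You should know, however, that this gap is not repairable, because the proposition as stated is false for $l\ge3$: taking $k=l$, one has $q_{l,l}(v)-(2i)^le^{-lv}=(2i)^le^{-lv}\bigl[(1-e^{-2v})^{-l}-1\bigr]$, whose modulus is at least $2^{l}l\,e^{-(l+2)v}$, and this is not $O(e^{-2vl})$ once $2l>l+2$. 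The paper's own proof makes the same leap: from $\deg C_{k,l}\le 4l-1$ against $\deg A_l=4l$ one can only conclude a remainder $O(e^{-v})$ (or $O(e^{-2v})$, noting that $A_l(e^v)$ and $e^{2vl}B_{k,l}(e^v)$ are polynomials in $e^{2v}$), not $O(e^{-vl})$. Your intermediate bound $e^{-(l+2)v}$ is the correct version of the statement, and it suffices for all downstream uses: in \eqref{int-I2-I3} it yields $e^{-2t}t$ in place of $e^{-lt}t$, which is still negligible against $t$, so Theorem \ref{th-L1} and the uniform lower bounds are unaffected. In short: as a proof of the literal statement your argument fails at the final absorption, but what you actually proved is the right correction of the proposition.
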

\begin{proof}
By Proposition \ref{Dlu-polynomials}, we have
$$
(\sh v)^{2l}q_{k,l}(v) = P_{k,l}(\sh v, \ch v),
$$
for some two-variable homogeneous polynomial $P_{k,l}$ of degree $l$. It is easy to see that there exists a polynomial 
$A_l$ of degree $4l$ such that 
$$
e^{2vl}(\sh v)^{2l}=(e^v\sh v)^{2l}=A_l(e^v),
$$
and a polynomial $B_{k,l}$ of degree $2l$ such that 
$$
e^{vl}P_{k,l}(\sh v, \ch v)=P_{k,l}(e^v\sh v, e^v\ch v)=B_{k,l}(e^v).
$$
Thus there exist some nonzero constant $a_{k,l}$ and some polynomial $C_{k,l}$ of degree $\le 4l-1$ such that 
$$
e^{vl}q_{k,l}(v) = \frac{e^{vl} P_{k,l}(\sh v, \ch v)}{(\sh v)^{2l}} = \frac{e^{2vl}B_{k,l}(e^v)}{A_l(e^v)}=a_{k,l}+\frac{C_{k,l}(e^v)}{A_l(e^v)}.
$$
Hence for some constant $b_{l}>0$ we have
\begin{equation}\label{akl-diff}
|e^{vl}q_{k,l}(v)- a_{k,l} |\le b_{l}e^{-vl},
\end{equation}
or equivalently
$$|q_{k,l}(v)- a_{k,l} e^{-vl}|\le b_{l} e^{-2vl}.$$

It remains to show that $a_{l,l}\ne0$. The function $q_{l,l}$ is in fact easy to calculate: by \eqref{qkl}, $q_{l,l}(v) = \Big(\dfrac i{\sh v} \Big)^l$.
For $l=0$, we have $a_{0,0}=1$. For $l>0$, by \eqref{akl-diff},
$$
a_{k,l} = \lim_{v\to+\infty} e^{vl}q_{k,l}(v),
$$
so that $a_{l,l} = (2i)^l\ne0$.
\end{proof}

We can write now
\begin{align}\label{kt-formula}
&k_t(x,y) = c_l e^{-\frac{nx}2} \int_R^\infty (\ch v-\ch R)^{l-\frac n2}
 \\&\hskip4cm \int_0^\infty \sum_{k=0}^l [s^k e^{isv} - (-s)^k e^{-isv}]\, q_{k,l}(v)\psi(s) \,s\,e^{its}dv\,ds \notag
\\ &= \frac{c_l}2 e^{-\frac{nx}2}  \int_R^\infty (\ch v-\ch R)^{l-\frac n2} 
 \sum_{k=0}^l\, q_{k,l}(v) [\,\check m_k(t+v)  - (-1)^k \check m_k(t-v)]\, dv \notag
\end{align}
where $m_k(s) = \psi(s) s^{k+1} I_{[0,+\infty)}$ and $\check{m}_k$ is the inverse Fourier transform, which we write without additional constants.

While $m_k(0)=0$ for all $k$, the derivatives of $m_k$ may be discontinuous at $0$, so we need some attention when estimating $\check m_k$. Integrating by parts, we can write, for $0\ne\xi\in\R$:
\begin{align}\label{mk-by-parts}
\check m_k(\xi) 
& = \frac1{\xi^2} \big[m_k'(s) e^{i\xi s}\big]_0^\infty - \frac1{\xi^2} \int_0^\infty m_k''(s) e^{i\xi s}ds,
\end{align}
which is bounded as
\begin{equation}\label{mk}
|\check m_k(\xi)| \le C_{l,\psi} |\xi|^{-2}
\end{equation}
for $0\le k\le l$.
One can note also that $m_k\in L^1(\R)$ for every $k$ and $|\check m_k(\xi)|\le C_{l,\psi}$.

We will separate \eqref{kt-formula} into two parts: $k_{t}(x,y)=\frac{c_l}2 (I_1-I_2)$ with 
\begin{equation}\label{eq-I1}
I_1=  e^{-\frac{nx}2} \int_R^\infty (\ch v-\ch R)^{l-\frac n2} \sum_{k=0}^l\, q_{k,l}(v) \check m_k(t+v) dv
\end{equation}
and 
\begin{equation}\label{eq-I2}
I_2= e^{-\frac{nx}2}  \int_R^\infty (\ch v-\ch R)^{l-\frac n2} 
\sum_{k=0}^l\, q_{k,l}(v) (-1)^{k+1} \check m_k(t-v)\, dv.
\end{equation}
Bounds for $\|k_t\|_1$ will be derived from estimates of the integrals of $I_1$ and $I_2$ over a certain set $\{(x,y)\in G: R(x,y)\in [t+a,t+b]\}$. These estimates are eventually reduced to the following Lemmas \ref{int} and \ref{int-t}, preceded by a short calculation in Lemma \ref{beta}.

\subsection{Integration lemmas}

\begin{lm}\label{beta}
	For $\alpha>-1$ and $\beta>\alpha $, we have 
	\begin{equation*}
	\int_{0}^{\infty}(e^v-1)^\alpha e^{-\beta v}dv=B(\alpha+1,\beta -\alpha)<\infty,
	\end{equation*}
	where $B(\cdot,\cdot)$ is the beta function.
\end{lm}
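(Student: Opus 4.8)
The plan is to reduce the integral to a standard Beta-function integral by a single change of variables. First I would substitute $u = 1 - e^{-v}$, which maps $[0,+\infty)$ bijectively onto $[0,1)$, with $du = e^{-v}\,dv$. Under this substitution one has $e^{-v} = 1-u$, so that $dv = du/(1-u)$, while $e^v - 1 = u/(1-u)$ and $e^{-\beta v} = (1-u)^\beta$. Collecting the powers of $u$ and of $1-u$, I expect the integrand to collapse to $u^\alpha (1-u)^{\beta-\alpha-1}$, so that the integral becomes exactly
$$
\int_0^1 u^\alpha (1-u)^{\beta-\alpha-1}\,du = B(\alpha+1,\,\beta-\alpha)
$$
by the definition of the Beta function. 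As an equivalent route one could instead set $t = e^v - 1$, sending $[0,+\infty)$ to itself with $dv = dt/(1+t)$ and $e^{-\beta v} = (1+t)^{-\beta}$, which turns the integral into $\int_0^\infty t^\alpha (1+t)^{-\beta-1}\,dt$, the alternative representation of the same Beta value. I would present whichever substitution keeps the bookkeeping cleanest.

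To justify finiteness --- which is precisely what the hypotheses $\alpha > -1$ and $\beta > \alpha$ encode --- I would examine the two endpoints before (or after) the substitution. Near $v = 0$ one has $e^v - 1 \sim v$ and $e^{-\beta v} \to 1$, so the integrand behaves like $v^\alpha$, integrable at $0$ exactly when $\alpha > -1$. Near $v \to +\infty$ one has $(e^v-1)^\alpha \sim e^{\alpha v}$, so the integrand behaves like $e^{(\alpha-\beta)v}$, integrable exactly when $\beta > \alpha$. After the change of variables these reappear as the conditions $\alpha + 1 > 0$ and $\beta - \alpha > 0$ on the two Beta parameters, confirming convergence and the finiteness asserted in the statement.

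There is no genuine obstacle here: the lemma is a routine computation whose only content is choosing the right substitution and matching the resulting exponents to the Beta-function parameters. The one point requiring a moment of care is tracking the single extra factor of $1-u$ (respectively $1+t$) coming from $dv$, which is exactly what shifts the second Beta-parameter from $\beta-\alpha+1$ down to $\beta-\alpha$; getting this bookkeeping right is all that separates the correct answer from an off-by-one error in the parameters.
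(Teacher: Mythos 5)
Your substitution $u=1-e^{-v}$ is, up to the reflection $u\mapsto 1-u$, exactly the change of variables $x=e^{-v}$ used in the paper, and your bookkeeping of the exponents is correct, so the proposal matches the paper's proof in essence. The endpoint convergence check is a harmless addition not present in the paper's (equally valid) one-line computation.
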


\begin{proof}
	This is a direct computation:
	\begin{align*}
	\int_{0}^{\infty}(e^v-1)^\alpha e^{-\beta v}dv
	=&\int_{0}^{\infty}(1-e^{-v})^\alpha e^{-(\beta-\alpha-1)v}e^{-v}dv\\
	=&\int_{0}^{1}(1-x)^\alpha x^{\beta-\alpha-1}dx\\
	=&B(\alpha+1,\beta -\alpha)<\infty.
	\end{align*}
\end{proof}

\begin{lm}\label{int}
For any $a<b$, $m\ge0$ and $t>\max(|a|,b)$ there exists a positive constant $C_{n,a,b}$ such that
$$
\int_{R\in[t+a,t+b]} e^{-n\frac{x+R}2} (t+R)^{-m} dx dy\le C_{n,a,b}\, t^{1-m}.
$$
\end{lm}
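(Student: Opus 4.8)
The plan is to reduce the $(n+1)$-dimensional integral over $(x,y)$ to a one-dimensional integral in the distance $R$ by foliating $G$ into the distance spheres $\{R(x,y)=\text{const}\}$, and then to estimate the resulting integral over the interval $R\in[t+a,t+b]$ of fixed length $b-a$.

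First I would pass to polar coordinates $y=r\omega$, $r=|y|\ge0$, $\omega\in S^{n-1}$, so that $dy=r^{n-1}dr\,d\omega$. Fixing $x$ and using the defining relation $\ch R=\ch x+\tfrac12|y|^2e^{-x}$, i.e.\ $r^2=2e^x(\ch R-\ch x)$, I change the radial variable $r$ to $R$: differentiating gives $r\,dr=e^x\sh R\,dR$, hence $r^{n-1}dr=r^{n-2}e^x\sh R\,dR=2^{(n-2)/2}e^{nx/2}(\ch R-\ch x)^{(n-2)/2}\sh R\,dR$. The range of $x$ at fixed $R$ is $[-R,R]$ (since $\ch R\ge\ch x$), and integrating out $\omega$ produces the surface area $\omega_{n-1}$ of $S^{n-1}$. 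The decisive point is that the weight $e^{-nx/2}$ in the integrand cancels exactly the factor $e^{nx/2}$ coming from the Jacobian, so that all $x$-dependence survives only through $(\ch R-\ch x)^{(n-2)/2}$. This leaves
$$\int_{R\in[t+a,t+b]}\!\!e^{-n\frac{x+R}2}(t+R)^{-m}dx\,dy=\omega_{n-1}2^{\frac{n-2}2}\!\int_{t+a}^{t+b}\!\!e^{-\frac{nR}2}(t+R)^{-m}\sh R\;J(R)\,dR,$$
where $J(R):=\int_{-R}^R(\ch R-\ch x)^{(n-2)/2}dx$.

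The heart of the matter is the bound $J(R)\le C_nR\,e^{(n-2)R/2}$ for $R\ge1$. I would substitute $p=R-x$ on $[0,R]$ (doubling by symmetry) and factor $\ch R-\ch x=2\sh\frac{2R-p}2\sh\frac p2$. For $R\ge1$ one has $\sh\frac{2R-p}2\asymp\frac12e^{R-p/2}$ with constants independent of $p\in[0,R]$ (using $\sh z\le\frac12e^z$ and, since $R-p/2\ge R/2\ge\frac12$, also $\sh z\ge\frac{1-e^{-1}}2e^z$), while $\sh\frac p2=\frac12e^{-p/2}(e^p-1)$ exactly. Multiplying, the factor $e^{(n-2)R/2}$ comes out and the $p$-integrand collapses to $(1-e^{-p})^{(n-2)/2}$, so $J(R)\le C_ne^{(n-2)R/2}\int_0^R(1-e^{-p})^{(n-2)/2}dp$. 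Here $\alpha:=\frac{n-2}2>-1$ lies exactly at the borderline $\beta=\alpha$ of Lemma~\ref{beta}; correspondingly $\int_0^R(1-e^{-p})^{\alpha}dp\le C_nR$ (bound the integrand by $1$ when $\alpha\ge0$, and write $(1-e^{-p})^\alpha=1+g(p)$ with $g\ge0$ integrable on $[0,\infty)$ when $-1<\alpha<0$). This single power of $R$ is precisely the source of the factor $t$ in the final bound $t^{1-m}$.

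It remains to assemble. Using $\sh R\le\frac12e^R$ together with the bound on $J(R)$, the three exponentials cancel, $e^{-nR/2}\cdot e^{R}\cdot e^{(n-2)R/2}=1$, leaving the integrand $\lesssim R\,(t+R)^{-m}$. On $[t+a,t+b]$ we have $t+R\ge2t+a\ge t>0$, so $(t+R)^{-m}\le t^{-m}$, while $R\le t+b\lesssim t$; integrating over the interval of length $b-a$ yields the claimed bound $C_{n,a,b}\,t^{1-m}$. (For the finitely many small values $t\le1-a$ the left-hand side is a finite integral over a bounded region, so the bound holds after enlarging the constant.) The main obstacle is the uniform-in-$n$ estimate of $J(R)$: the exponent $(n-2)/2$ changes sign between $n=1$ and $n\ge2$, so the naive monotonicity bound $(\ch R-\ch x)^{(n-2)/2}\le(\ch R)^{(n-2)/2}$ is available only for $n\ge2$, and the borderline Beta integral must be treated by hand to extract exactly one power of $R$.
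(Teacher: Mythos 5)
Your proof is correct, but it takes a genuinely different route from the paper's. The paper never changes variables to $R$: after passing to polar coordinates it simply bounds $r^{n-1}\le 2^{(n-1)/2}e^{(n-1)(x+R)/2}$ pointwise (so that the weight $e^{-n(x+R)/2}$ collapses to $e^{-(x+R)/2}$), encloses the region in $\{|x|\le t+b,\ 0\le r\le 2e^{(x+t+b)/2}\}$, and bounds the $r$-integral by the length of that interval; the exponentials cancel and the factor $t$ comes out of the $x$-integration over $|x|\le t+b$. This is shorter, needs no Jacobian, and treats all $n$ uniformly. You instead perform the exact co-area computation $r^{n-1}dr=2^{(n-2)/2}e^{nx/2}(\ch R-\ch x)^{(n-2)/2}\sh R\,dR$, observe the exact cancellation of $e^{\pm nx/2}$, and reduce everything to the shell density $\sh R\cdot J(R)$ with $J(R)=\int_{-R}^R(\ch R-\ch x)^{(n-2)/2}dx\le C_nRe^{(n-2)R/2}$; your estimate of $J$ via the factorization $\ch R-\ch x=2\,\sh\frac{2R-p}2\,\sh\frac p2$ and the borderline integral $\int_0^R(1-e^{-p})^{(n-2)/2}dp\lesssim R$ is sound, including the two-sided bound on $\sh\frac{2R-p}2$ needed to handle the negative exponent when $n=1$. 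What your approach buys is sharpness: the same computation is two-sided, so it would also deliver the lower bound of Lemma \ref{int-t} with matching constants, which the paper has to prove separately by a ``thick sphere'' volume argument; the price is the Jacobian bookkeeping, the integrable boundary singularity at $x=\pm R$ for $n=1$, and the small-$t$ regime $t+a<1$, which you correctly dispose of by compactness (though ``finitely many small values'' should read ``the bounded range $t\in(\max(|a|,b),\,1-a]$'').
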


\begin{proof}
As the integrand depends on $x$ and $|y|$ only, we can pass to ($n$-dimensional) polar coordinates in $y = (r,\Phi)$. For $x,r\in \R$, denote $R'(x,r) = R(x,(r,0))$, and $F_{a,b} = \{(x,r) \in \R^2 : R'(x,r) \in [t + a,t + b]\}$. We obtain
	\begin{align*}
I_{a,b} &=	\int_{R\in[t+a,t+b]} e^{-n\frac{x+R}2} (t+R)^{-m} dx dy	
	\\&=V_n \int_{F_{a,b}} e^{-n\frac{x+R'}2} (t+R')^{-m}r^{n-1}  dx dr,
	\end{align*}
	where $V_n$ is the volume of the unit ball in $\R^n$. By \eqref{R} and inequalities just after it, the integral can be bounded by
\begin{align*}
I_{a,b}
&\le 2^{\frac{n-1}2} V_n \int_{F_{a,b}} e^{-\frac{x+R'}2} t^{-m}  dx dr
\\&\le 2^{\frac{n-1}2} V_n \int_{F_{a,b}} e^{-\frac{x+t+a}2} t^{-m}  dx dr.
\end{align*}
Again by \eqref{R}, if $(x,r)\in F_{a,b}$, then $|x|\le R'(x,r)\le t+b$, and $0\le r\le \sqrt 2\exp\big(\frac{x+R'}2\big) \le 2\exp\big(\frac{x+t+b}2\big)$.
This implies that
\begin{align*}
\int_{F_{a,b}} e^{-\frac{x+t+a}2} t^{-m}  dx dr &\le t^{-m} \int_{|x|\le t+b} e^{-\frac{x+t+a}2} \int_0^{2\exp\big(\frac{x+t+b}2\big)} dr dx
\\& = 2 t^{-m} \int_{|x|\le t+b} e^{\frac{b-a}2} dx = 4 e^{\frac{b-a}2} t^{-m} (t+b)
\\&\le 8 e^{\frac{b-a}2} t^{1-m}
\end{align*}
(since we suppose $b<t$).
\end{proof}

\begin{lm}\label{int-t}
For $a<b$ and $t>\max(1,2|a|,b)$, there exists a constant $C'_{n,a,b}>0$ such that
$$
\int_{t+a\le R\le t+b} e^{-n\frac{x+R}2} dx dy \ge C'_{n,a,b}\, t.
$$
\end{lm}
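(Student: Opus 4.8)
The plan is to establish a lower bound matching the upper bound of Lemma~\ref{int} (taken with $m=0$), by turning the crude inequalities used there into sharp two-sided ones after a convenient change of variables. Exactly as in the proof of Lemma~\ref{int}, I first pass to $n$-dimensional polar coordinates in the $y$-variable, writing $y=(r,\Phi)$, $R'(x,r)=R(x,(r,0))$, and reducing the integral to
\[
\int_{t+a\le R\le t+b} e^{-n\frac{x+R}2}\,dx\,dy = V_n\int_{F_{a,b}} e^{-n\frac{x+R'}2} r^{n-1}\,dx\,dr,
\]
with $F_{a,b}=\{(x,r):R'(x,r)\in[t+a,t+b]\}$. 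The key idea is then to change the variable $r$ to $R'$ at fixed $x$. From \eqref{R} one has $r^2=2e^x(\ch R'-\ch x)$, whence $r^{n-1}\,dr = 2^{\frac{n-2}2} e^{\frac n2 x}(\ch R'-\ch x)^{\frac{n-2}2}\sh R'\,dR'$. Crucially, the factor $e^{\frac n2 x}$ cancels the weight $e^{-\frac n2 x}$, so $x$ survives only through $\ch x$ and the region becomes $\{t+a\le R'\le t+b,\ |x|\le R'\}$. This yields
\[
\int_{t+a\le R\le t+b} e^{-n\frac{x+R}2}\,dx\,dy = V_n 2^{\frac{n-2}2}\int_{t+a}^{t+b} e^{-\frac n2 R'}\,\sh R'\,G(R')\,dR', \qquad G(R'):=\int_{-R'}^{R'}(\ch R'-\ch x)^{\frac{n-2}2}\,dx .
\]

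The heart of the matter is a lower bound for the inner integral $G(R')$. I claim that for $R'\ge 2$ one has $G(R')\ge c_n R'\,e^{\frac{n-2}2 R'}$ with $c_n>0$. To see this, I restrict the $x$-integration to $[0,R'-1]$ (of length $\ge R'/2$) and bound the integrand below by $c_n' e^{\frac{n-2}2 R'}$: for $n\ge2$ I use $\ch R'-\ch x\ge\ch R'-\ch(R'-1)=2\sh(R'-\tfrac12)\sh\tfrac12\ge c\,e^{R'}$ on this range, while for $n=1$ I use instead $\ch R'-\ch x\le \ch R'\le e^{R'}$, so that $(\ch R'-\ch x)^{-1/2}\ge e^{-R'/2}$; in both cases the exponent works out to $\frac{n-2}2 R'$. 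Inserting this into the displayed identity and using $\sh R'\ge\tfrac14 e^{R'}$, the exponentials cancel completely, giving $e^{-\frac n2 R'}\sh R'\,G(R')\ge c_n'' R'$ for $R'\ge2$. Integrating over $[t+a,t+b]$ and using $R'\ge t+a>t/2$ on the whole interval (which holds because $t>2|a|$ forces $a>-t/2$) yields the desired bound $\ge C'_{n,a,b}\,t$, valid once $t+a\ge2$, e.g.\ for $t\ge4$.

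The main obstacle is not this large-$t$ asymptotics but making the constant uniform over the \emph{entire} admissible range $t>\max(1,2|a|,b)$, in particular over the possibly nonempty bounded range $\max(1,2|a|,b)<t<4$, where the length factor in $G(R')$ degenerates and the clean exponential estimates no longer dominate. Here I argue softly: denoting by $J(t)$ the right-hand side above, $J$ is a continuous, strictly positive function of $t$ (the integrand is continuous and positive, and $t+a>t/2>0$ keeps the domain of integration nonempty of positive measure), and $J(t)/t\to J(m)/m>0$ as $t\downarrow m:=\max(1,2|a|,b)$. Hence $t\mapsto J(t)/t$ extends continuously and positively to the compact interval $[m,4]$ and is therefore bounded below there by a positive constant. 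Combined with the explicit estimate for $t\ge4$, this furnishes a single $C'_{n,a,b}>0$ working for all admissible $t$, as required.
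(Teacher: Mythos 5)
Your proof is correct. The underlying mechanism is the same as the paper's -- both exploit $|y|^2=2e^x(\ch R-\ch x)$ so that the Jacobian factor $e^{nx/2}$ cancels the weight $e^{-nx/2}$, and in both arguments the factor $t$ ultimately comes from an $x$-integration over an interval of length comparable to $t$ on which the integrand is bounded below. The organization differs, though: the paper fixes $x$, bounds from below the volume of the annulus $\{y:R(x,y)\in[t+a,t+b]\}$ via the algebraic identity $A^{n/2}-B^{n/2}=(A^n-B^n)/(A^{n/2}+B^{n/2})$, and then integrates in $x$, reducing to $\int_0^{t+a}(1-e^{x-t-a})^{n-1}dx\gtrsim t$; you instead perform an exact change of variables $r\mapsto R'$ and Fubini, arriving at the clean identity with the inner integral $G(R')=\int_{-R'}^{R'}(\ch R'-\ch x)^{(n-2)/2}dx$, which you then bound below. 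Your route yields an exact formula before any estimation, which is arguably more transparent (and correctly handles the sign of the exponent $(n-2)/2$ separately for $n=1$ and $n\ge2$), but it costs you a soft continuity-and-compactness argument to cover the range $\max(1,2|a|,b)<t<4$, whereas the paper's explicit restriction to $x\in[0,t/4]$ handles all admissible $t$ with a quantitative constant in one stroke. Both are valid; the lemma only asserts existence of some $C'_{n,a,b}>0$, so your non-explicit constant on the compact range is acceptable.
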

\begin{proof}
Denote $G_{a,b} = \{(x,y)\in G: R(x,y)\in [t+a,t+b]\}$. From \eqref{R}, we have
$$
|y|^2 = 2 e^x (\ch R - \ch x).
$$ 
This formula implies that for $(x,y)$ such that $|x|\le t+a$ and
$$
2e^x (\ch (t+a)-\ch x) \le |y|^2 \le 2e^x (\ch (t+b)-\ch x),
$$
$R(x,y)$ is between $t+a$ and $t+b$, so that $G_{a,b}$ contains for each $x$ a ``thick sphere'' with $|y|$ changing according to the bounds above (under condition $|x|\le t+a$).
Again denoting by $V_n$ the volume of the unit ball in $\R^n$, we can estimate the volume of this ``sphere'' as follows:
\begin{align*}
V & = V_n 2^{\frac n2} e^{n\frac x2}
 \big[ \big(\ch (t+b)-\ch x\big)^{n/2} - \big(\ch (t+a)-\ch x\big)^{n/2} \big]
\\& = c_n e^{n\frac x2} \frac{ \big(\ch (t+b)-\ch x\big)^n - \big(\ch (t+a)-\ch x\big)^n} {\big(\ch (t+b)-\ch x\big)^{n/2} + \big(\ch (t+a)-\ch x\big)^{n/2}}
\\& \ge c_n e^{n\frac x2} \frac{(\ch(t+b)-\ch(t+a))
\,n \big(\ch (t+a)-\ch x\big)^{n-1} } {2\big(\ch (t+b)\big)^{n/2}}
\\& \ge \frac{c_n}2 e^{n\frac x2} e^{-\frac n2(t+b)} \big(\ch(t+b)-\ch(t+a)\big) \big(\ch (t+a)-\ch x\big)^{n-1}.
\end{align*}
Since $-2t-a-b=-2(t+a)+a-b\le a-b<0$, we can continue as
\begin{align*}
\ch(t+b)-\ch(t+a) = \frac12 e^t (e^b-e^a) (1-e^{-2t-a-b}) \ge \frac12 e^t (e^b-e^a)(1-e^{a-b}),
\end{align*}
and
$$
V \ge c_{n,a,b}\, e^{n\frac x2} e^{(1-\frac n2)t}\big(\ch (t+a)-\ch x\big)^{n-1}.
$$

The integral thus can be bounded by:
\begin{align*}
\int_{G_{a,b}} e^{-n\frac{x+R}2} dx dy 
&\ge c_{n,a,b}\, e^{(1-\frac n2)t} \int_{|x|\le t+a} e^{n\frac x2} e^{-n\frac{x+t+b}2} \big(\ch (t+a)-\ch x\big)^{n-1} dx
\\&= c'_{n,a,b}\, e^{(1-n)t} \int_0^{t+a} \big(\ch (t+a)-\ch x\big)^{n-1} dx
\\&= c'_{n,a,b}\, e^{(1-n)t} \int_0^{t+a} \Big[ \frac12 e^{t+a} (1-e^{x-t-a}) (1-e^{-x-t-a})\Big]^{n-1} dx
\\&\ge c''_{n,a,b} \int_0^{t+a} (1-e^{x-t-a})^{n-1} dx.
\end{align*}
By assumption $t>|a|$, so that $t/4<t+a$. If $0\le x\le t/4$, then $x-t-a \le (-t/2-a)-t/4\le -t/4$, so that
$$
\int_{G_{a,b}} e^{-n\frac{x+R}2} dx dy \ge c''_{n,a,b} \int_0^{t/4} (1-e^{-t/4})^{n-1} dx \ge c'''_{a,b,n}\, t,
$$
where in the last inequality we use the assumption $t>1$.
\end{proof}

\subsection{Estimates of $I_1$ and $I_2$}\label{sec-I1-I2}

\subsubsection{Estimates of $I_1$}

We are interested in $R\ge 1$, so that in \eqref{eq-I1} we have $v\ge1$. By Lemma~\ref{Dlu-polynomials}, $|q_{k,l}(v)|\le C_l e^{-lv}$ with some constant $C_l$.

Together with \eqref{mk}, we have
\begin{align*}
|I_1|&\le e^{-\frac{nx}2} \int_R^\infty (\ch v-\ch R)^{l-\frac n2} 
 \sum_{k=0}^l\, \Big| q_{k,l}(v) \check m_k(t+v) \Big|dv
\\&\le C_{l,\psi} e^{-\frac{nx}2} \int_R^\infty (\ch v-\ch R)^{l-\frac n2} e^{-lv} (t+v)^{-2} dv
\\&\le C_{l,\psi} e^{-\frac{nx}2} (t+R)^{-2} \int_0^\infty (\ch (v+R)-\ch R)^{l-\frac n2} e^{-lv-lR} dv.
 \end{align*}
One can transform
$$
2(\ch(v+R) - \ch R) 
 = (e^v-1)(e^R-e^{-v-R}) = (e^v-1)e^R (1-e^{-v-2R}),
$$
and estimate
\begin{align*}
|I_1| &\le C_{l,\psi} e^{-\frac{nx}2} (t+R)^{-2} \int_0^\infty ((e^v-1)e^R)^{l-\frac n2} e^{-lv-lR} dv
 \\&= C_{l,\psi} e^{-\frac{n(x+R)}2} (t+R)^{-2} \int_0^\infty (e^v-1)^{l-\frac n2} e^{-lv} dv.
\end{align*}
By Lemma \ref{beta}, the integral converges (and depends only on $n$), so that
\begin{equation}\label{I1-ptwise}
|I_1| \le C_{n,\psi} e^{-\frac{n(x+R)}2} (t+R)^{-2}.
\end{equation}
Now by Lemma \ref{int}, for $a<b$ and $t>\max(|a|,b)$
\begin{equation}\label{I1}
\int_{R\in[t+a,t+b]} |I_1| dxdy
 \le C_{l,\psi} t^{-1}.
\end{equation}

\subsubsection{Simplifications of $I_2$}

We can now pass to the second term:
\begin{align*}
I_2 & = e^{-\frac{nx}2}  \int_R^\infty (\ch v-\ch R)^{l-\frac n2} 
 \sum_{k=0}^l\, q_{k,l}(v) (-1)^{k+1} \check m_k(t-v)\, dv
\\ &= e^{-\frac{nx}2}  \int_0^\infty (\ch (v\!+\!R)-\ch R)^{l-\frac n2} 
 \sum_{k=0}^l\, q_{k,l}(v\!+\!R) (-1)^{k+1} \check m_k(t\!-\!R\!-\!v)\, dv.
\end{align*}
Its analysis will pass through several stages of simplification. By Lemma \ref{lemma-akl},
$$
|q_{k,l}(v)- a_{k,l} e^{-vl}|\le b_{l} e^{-2vl}.
$$
If we replace $q_{k,l}(v+R)$ by $a_{k,l} e^{-(v+R)l}$ in $I_2$ and denote
\begin{align*}
I_3 & := e^{-\frac{nx}2} \int_0^\infty (\ch (v+R)-\ch R)^{l-\frac n2} 
 \sum_{k=0}^l\, a_{k,l} e^{-(v+R)l} (-1)^{k+1} \check m_k(t\!-\!R\!-\!v)\, dv,
\end{align*}
then it is sufficient to estimate $I_3$ instead of $I_2$ because their difference is bounded as follows:
\begin{align*}
|I_2-I_3| &
\le e^{-\frac{nx}2}  \int_0^\infty (\ch (v+R)-\ch R)^{l-\frac n2} 
\\&\hskip3cm \sum_{k=0}^l\Big|q_{k,l}(v+R)-a_{k,l}e^{-(v+R)l}\Big|  |\check m_k(t-R-v)|\, dv
\\ 
& \le c'_l e^{-\frac{nx}2}  \int_0^\infty (\ch (v+R)-\ch R)^{l-\frac n2} 
 e^{-2(v+R)l} \sum_{k=0}^l |\check m_k(t-R-v)|\, dv.
\end{align*}
If $l=0$, then $I_3$ is exactly equal to $I_2$ since $q_{0,0}\equiv a_{0,0}=1$. For $l>0$, we need to show that $|I_3-I_2|$ is small enough.
By definition,
\begin{align*}
 (\ch (v+R)-\ch R)^{l-\frac n2} 
 &=2^{\frac{n}{2}-l}(e^v-1)^{l-\frac{n}{2}}e^{R(l-\frac{n}{2})}(1-e^{-v-2R})^{l-\frac{n}{2}}.
\end{align*}
Since $l-\frac{n}{2}\le0$ and $R\ge1$, 
$$
(1-e^{-v-2R})^{l-\frac{n}{2}} \le (1-e^{-2})^{l-\frac{n}{2}}.
$$
Next, since every $m_k$ is in $L^1$, we have $\sum_{k=0}^l |\check m_k(t-R-v)|\le C_{n,\psi}$. These, together with Lemma \ref{beta}, yield
\begin{align*}
|I_2-I_3| & \le C_{n,\psi,l}\, e^{-\frac{nx}2}  \int_0^\infty (e^v-1)^{l-\frac n2} 
 e^{-2(v+R)l+R(l-\frac{n}{2})} dv
\\ & \le C_{n,\psi,l}\, e^{-\frac{n(x+R)}2-Rl} \int_0^\infty (e^v-1)^{l-\frac n2} e^{-2vl}dv
\\ & = C_{n,\psi,l}\,  B\big(l-\frac{n}{2}+1,l+\frac{n}{2}\big)e^{-\frac{n(x+R)}2-Rl},
\end{align*}
which has, by Lemma \ref{int}, the integral bounded as follows (with the same assumptions on $t$ as in the lemma):
\begin{align}\label{int-I2-I3}
\int_{t+a\le R \le t+b} |I_2-I_3| dxdy &\le C'_{n,\psi} \, e^{-(t+a)l} \int_{t+a\le R\le t+b} e^{-\frac{n(x+R)}2} dxdy \notag
\\ &\le C_{n,\psi} \, e^{-tl} t.
\end{align}

Our next step is to pass to
\begin{align*}
I_4 & := e^{-\frac{nx}2} \int_0^\infty\! 2^{\frac{n}{2}-l}e^{R(l-n/2)}(e^v\!-\!1)^{l-\frac n2} 
 \sum_{k=0}^l a_{k,l} e^{-(v+R)l} (-1)^{k+1} \check m_k(t\!-\!R\!-\!v)\, dv
\end{align*}
showing that $I_3-I_4$ is small.
If $l=\frac n2$, then $I_3=I_4$ is an exact equality. If $l-\frac{n}{2}=-\frac12$, we have to estimate
\begin{align*}
D &:= \Big|(\ch (v+R)-\ch R)^{l-\frac n2} -2^{\frac{n}{2}-l}(e^v-1)^{l-\frac{n}{2}}e^{R(l-\frac{n}{2})}\Big|\\
&=2^{\frac{n}{2}-l}(e^v-1)^{l-\frac{n}{2}}e^{R(l-\frac{n}{2})}\Big|1-(1-e^{-v-2R})^{l-\frac{n}{2}}\Big|.
\end{align*}
Set $z=e^{-v-2R}$. By the choice of $R$ and $v$, we have $z\in (0,e^{-2}]$. In this interval, the function $f(z)=(1-z)^{-1/2}$ has a derivative $f'(z) = \frac12 (1-z)^{-3/2}$ bounded by $f'(e^{-2})$, so that
$$
\Big|1-(1-e^{-v-2R})^{l-\frac{n}{2}}\Big| = |f(0)-f(z)| \le f'(e^{-2})\, z = f'(e^{-2})\,e^{-v-2R}.
$$
This implies that
$$
D \le C_n (e^v-1)^{l-\frac{n}{2}}e^{R(l-\frac{n}{2})} e^{-v-2R}
$$
and
\begin{align*}
|I_3-I_4| & \le C_n e^{-\frac{nx}2} \int_0^\infty (e^v-1)^{l-\frac{n}{2}}e^{R(l-\frac{n}{2})} e^{-v-2R} e^{-(v+R)l}
\\&\hskip4.5cm \sum_{k=0}^{l}|a_{k,l}||\check m_k(t-R-v)|dv
\\ & \le C_{n,\psi,l}\, e^{-\frac{n(x+R)}2-2R} \int_0^\infty (e^v-1)^{l-\frac n2} e^{-(l+1)v}\, dv
\\ &= C_{n,\psi}B\big(l-\frac{n}{2}+1,\frac{n}{2}+1\big)\, e^{-\frac{n(x+R)}2-2R},
\end{align*}
with the integral bounded by
\begin{align}\label{int-I3-I4}
\int_{t+a\le R \le t+b} |I_3-I_4| dxdy &\le C'_{n,\psi} \, e^{-2(t+a)} \int_{t+a\le R\le t+b} e^{-\frac{n(x+R)}2} dxdy \notag
\\ &\le C_{n,\psi} \, e^{-2t} t.
\end{align}

\subsubsection{Estimates of $I_4$}\label{sec-I4}

It remains now to estimate $I_4$ from below. Recall that 
\begin{align*}
I_4 & = c'_n e^{-\frac{n(x+R)}2} \int_0^\infty (e^v-1)^{l-\frac n2} 
 \sum_{k=0}^l\, a_{k,l} e^{-vl} (-1)^{k+1} \check m_k(t-R-v)\, dv.
\end{align*}
Set
$$
\cal M(\xi) = \sum_{k=0}^l\, a_{k,l} (-1)^{k+1} \check m_k(\xi);
$$
this is the inverse Fourier transform of
$$
\Psi(s) = \sum_{k=0}^l\, a_{k,l} (-1)^{k+1} s^{k+1} \psi (s) I_{[0,+\infty)}.
$$
By assumption, $\psi$ is not identically zero on $[0,+\infty)$; it is multiplied by a nonzero polynomial since $a_{l,l}\ne0$ by Lemma \ref{lemma-akl}, so finally the product $\Psi$ is not identically zero.
Consider the integral entering in $I_4$,
\begin{equation}\label{Ixi}
\cal I(\xi) = \int_0^\infty (e^v-1)^{l-\frac n2} e^{-vl} \cal M(\xi-v)\, dv
\end{equation}
as the convolution of $\cal M$ and $\cal N(v):= (e^v-1)^{l-\frac n2} e^{-vl}1_{(0,\infty)}(v)$. The function $\cal I$ is continuous, and its inverse Fourier transform is $\Psi \check {\cal N}$.
For every $m$, we can estimate
\begin{align*}
\int_\R |\cal N(v)v^m|dv &\le \int_0^1 (e^v-1)^{l-\frac n2}dv + \int_1^\infty C_n e^{-v\frac n2} v^m dv
\\& \le C'_n + C_n \Big( \frac 2n\Big)^{m+1} \int_{n/2}^\infty e^{-x} x^m dx
\le C''_n K^m m!
\end{align*}
with $K=\max(1,2/n)$. It follows that $\|\check {\cal N}^{(m)}\|_\infty \le C_n K^m m!$, so by \cite[19.9]{rudin} $\check {\cal N}$ is analytic in a strip $|z|<\delta$ with some $\delta>0$. As it is clearly not identically zero, we can conclude that $\Psi \check {\cal N}$ is not identically zero too.

There exist therefore $\a,\beta\in\R$, $\alpha<\beta$, $A\ne0$ and $0<\e<|A|/2$ (all these constants depend on $\psi$ and on $n$) such that $|\cal I(\xi)-A|<\e$ for $\xi\in[\a,\beta]$. It follows that for $R\in[t-\beta,t-\a]$
\begin{equation}\label{I4-ptwise}
|I_4| > C_{n,\psi} e^{-\frac{n(x+R)}2}
\end{equation}
and by Lemma \ref{int-t}, with $a=-\beta$, $b=-\a$,
\begin{equation}\label{I4}
\int_{R\in[t+a,t+b]} |I_4|dxdy \ge C_{n,\psi}\,t,
\end{equation}
once $t>\max(1,2|a|,b)$.

\subsection{Conclusions}\label{sec-conclusions}

We can now summarize the obtained estimates in the following theorem.

\begin{thm}\label{th-L1}
Let $\psi\in C_0(\R)$ be a function which is not identically zero on $[0,+\infty)$ and twice differentiable with $\|\psi^{(j)}(s)s^k\|_\infty \le C_\psi$ for $0\le j\le 2$, $0\le k\le n/2+3$. Let $k_t$ be the convolution kernel of one of the following operators:
\begin{align*}
E_t &= \psi(\sqrt{\cal L})\exp(it\sqrt{\cal L}),\\
C_t &= \psi(\sqrt{\cal L})\cos(t\sqrt{\cal L}),\\
S_t &= \psi(\sqrt{\cal L}) \dfrac{\sin(t\sqrt{\cal L})}{\sqrt {\cal L}}.
\end{align*}
Then there exists a constant $C_{n,\psi}$ depending on $n$ and on $\psi$ such that for $t>0$ sufficiently large,
$$
\|k_t\|_1 \ge C_{n,\psi}\,t.
$$
\end{thm}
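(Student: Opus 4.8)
The plan is to bound $\|k_t\|_1$ from below by integrating $|k_t|$ over the thin annulus on which the lower estimate \eqref{I4} for $I_4$ holds, and to show that there $I_4$ dominates every other contribution. Let $\a,\beta$ be the constants produced in Section~\ref{sec-I4}, set $a=-\beta$ and $b=-\a$, and recall from \eqref{kt-formula} that $k_t=\tfrac{c_l}2(I_1-I_2)$ together with the two-step simplification of $I_2$ into $I_3$ and then $I_4$. The triangle inequality gives, pointwise,
\[
|k_t| = \tfrac{|c_l|}2\,|I_1-I_2| \ge \tfrac{|c_l|}2\Big(|I_4|-|I_1|-|I_2-I_3|-|I_3-I_4|\Big).
\]
Integrating over $\{(x,y):R\in[t+a,t+b]\}$ and inserting the four estimates already in hand — the lower bound \eqref{I4}, namely $\int|I_4|\ge C_{n,\psi}t$, against the remainders \eqref{I1}, \eqref{int-I2-I3} and \eqref{int-I3-I4} — yields
\[
\|k_t\|_1 \ge \int_{R\in[t+a,t+b]}|k_t|\,dxdy \ge \tfrac{|c_l|}2\Big(C_{n,\psi}\,t-C\,t^{-1}-C\,e^{-tl}t-C\,e^{-2t}t\Big).
\]
As soon as each remainder is $o(t)$, the linear term prevails and $\|k_t\|_1\ge C'_{n,\psi}t$ follows for all large $t$, settling the case $E_t$.

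The only delicate point in this bookkeeping is that the factor $e^{-tl}$ in \eqref{int-I2-I3} fails to decay when $l=0$. This is harmless because the reductions were built with matching exact identities: for $l=0$ one has $q_{0,0}\equiv a_{0,0}$, so $I_2=I_3$ and $\int|I_2-I_3|$ vanishes identically, while \eqref{int-I3-I4} still provides the genuine gain $e^{-2t}$ coming from $l-\tfrac n2=-\tfrac12$; dually, when $n$ is even and $l=\tfrac n2$ one has $I_3=I_4$, so $\int|I_3-I_4|=0$ and the surviving $\int|I_2-I_3|$ decays like $e^{-tl}t$ with $l\ge1$. In every parity, then, each difference is either identically zero or $O(e^{-ct}t)$ for some $c>0$, hence $o(t)$, so the bound above applies uniformly in $n$.

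For $C_t$ and $S_t$ I would rerun the whole argument after substituting $\cos(ts)$, respectively $\tfrac{\sin(ts)}s$, for $e^{its}$ in the kernel formula. Writing $\cos(ts)=\tfrac12(e^{its}+e^{-its})$ and $\tfrac{\sin(ts)}s=\tfrac1{2is}(e^{its}-e^{-its})$, each scalar transform $\check m_k(t\pm v)$ merely acquires a companion $\check m_k(-t\pm v)$ (and for $S_t$ the weight $s^{k+1}$ in $m_k$ drops to $s^k$). After the shift $v\mapsto v+R$ the only arguments remaining bounded near $R\approx t$ are those of the form $\pm(t-R-v)$, so the remainder bounds \eqref{I1}, \eqref{int-I2-I3}, \eqref{int-I3-I4} survive verbatim — they used only $|\check m_k|\le C$ and the decay $|\check m_k(\xi)|\le C|\xi|^{-2}$ of \eqref{mk} — and the passage from $I_2$ to $I_4$ carries over with the profile $\cal M$ replaced by the corresponding cosine or sine transform. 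The step I expect to require the most care, and the genuine crux here, is the survival of the main term: one must verify that this modified profile is still not identically zero, i.e.\ that the cosine (resp.\ sine) transform of the polynomial multiple $\Psi$ of $\psi$ from Section~\ref{sec-I4} does not vanish identically. Since $a_{l,l}\ne0$ by Lemma~\ref{lemma-akl} and $\psi\not\equiv0$ on $[0,+\infty)$, $\Psi$ is a nonzero element of $L^1(0,+\infty)$, and the injectivity of the Fourier cosine and sine transforms forces its transform to be nonzero; the analyticity of $\check{\cal N}$ from Section~\ref{sec-I4} then makes the relevant convolution bounded below on an interval of $\xi$, reproducing \eqref{I4} and hence $\|k_t\|_1\ge C_{n,\psi}t$ for $C_t$ and $S_t$ exactly as for $E_t$.
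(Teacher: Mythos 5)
Your proposal follows the paper's own proof essentially step for step: the same decomposition $k_t=\tfrac{c_l}2(I_1-I_2)$, the same chain of reductions $I_2\to I_3\to I_4$ with the exact identities $I_2=I_3$ (when $l=0$) and $I_3=I_4$ (when $l=\tfrac n2$) absorbing the otherwise non-decaying error terms, the same integration over $\{R\in[t+a,t+b]\}$ via Lemmas \ref{int} and \ref{int-t} combined with \eqref{I1}, \eqref{int-I2-I3}, \eqref{int-I3-I4} and \eqref{I4}, and the same treatment of $C_t$ and $S_t$ by splitting into exponentials and checking that the modified symbol $\Psi$ remains nonzero. The one small slip is your claim that \eqref{I1} survives verbatim for $S_t$: there $m_k(s)=s^k\psi(s)I_{[0,+\infty)}$ admits only one integration by parts, so \eqref{mk} degrades to $|\check m_k(\xi)|\le C|\xi|^{-1}$ and the $I_1$ contribution is only $O(1)$ rather than $O(t^{-1})$ --- still $o(t)$, so the conclusion is unaffected.
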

\begin{proof}
{\bf Case 1:} $k_t$ is the kernel of $E_t$. By the results of Section \ref{sec-I4}, there exist $a,b\in\R$ such that \eqref{I4} holds, for $t>\max(1,2|a|,b)$. For the same $a,b,t$ we have, with \eqref{I1}:
$$
\|k_t\|_1 \ge \frac{c_l}2 \int_{R\in[t+a,t+b]}|I_1-I_2|dxdy
\ge C_{n,\psi} \Big(\int_{R\in[t+a,t+b]}|I_2|dxdy -\frac1t \Big).
$$
Set $\gamma_0=0$ and $\gamma_l=1$ if $l\ne0$. We can continue, using the results above, as
\begin{align*}
\|k_t\|_1 & \overset{\eqref{int-I2-I3}}{\ge} C_{n,\psi} \Big(\int_{R\in[t+a,t+b]}|I_3|dxdy - \gamma_l\, e^{-lt} t - \frac1t \Big)
\\& \overset{\eqref{int-I3-I4}}{\ge} C_{n,\psi} \Big(\int_{R\in[t+a,t+b]}|I_4|dxdy - e^{-2t} t - \gamma_l\, e^{-lt} t - \frac1t \Big)
\\& \overset{\eqref{I4}}{\ge} C_{n,\psi} \Big(t - e^{-2t} t - \gamma_l\, e^{-lt} t - \frac1t \Big).
\end{align*}
For $t$ big enough, this is clearly bounded from below by $C_{n,\psi}\,t$, as claimed.

{\bf Case 2:} $k_t$ is the kernel of $C_t$. We represent $\cos(its)$ as $\frac12(e^{its}+e^{-its})$; in the integral \eqref{kt-formula}, instead of $[\,\check m_k(t+v)  - (-1)^k \check m_k(t-v)]$ we then obtain
$$
\frac12\big[\check m_k(t+v)  + \check m_k(v-t) - (-1)^k \check m_k(t-v) - (-1)^k \check m_k(-t-v)\big].
$$
We separate then $k_t=\frac{c_l}4(I_1-I_2)$ into
\begin{align*}
I_1 &=  e^{-\frac{nx}2} \int_R^\infty (\ch v-\ch R)^{l-\frac n2} \sum_{k=0}^l\, q_{k,l}(v)
[ \check m_k(t+v) - (-1)^k \check m_k(-t-v) ] dv,
\\I_2 &= e^{-\frac{nx}2}  \int_R^\infty (\ch v-\ch R)^{l-\frac n2} 
\sum_{k=0}^l\, q_{k,l}(v) [ (-1)^{k+1} \check m_k(t-v)+ \check m_k(v-t)]\, dv.
\end{align*}
As $|\check m_k(-t-v)| \le C_{n,\psi} (t+v)^{-2}$, the estimate for $I_1$ remains the same. The passage from $I_2$ to $I_4$ does not change either. In $I_4$ we still have \eqref{Ixi}, but with
$$
\cal M(\xi) = \sum_{k=0}^l\, a_{k,l} \big[\check m_k(-\xi) + (-1)^{k+1} \check m_k(\xi)\big].
$$
If we denote by $\tilde m_k$ the function $\tilde m_k(s) = m_k(-s)$, then $\cal M$ is the inverse Fourier transform of
$$
\Psi(s) = \sum_{k=0}^l\, a_{k,l} \big[ \tilde m_k(s) + (-1)^{k+1} m_k(s) \big]
$$
which is for $s\ge0$ the same as in Case 1, so that it is not identically zero for similar reasons as above. This implies that $\cal I$ is not everywhere zero, so that we arrive at the same conclusion:
$\|k_t\|_1 \ge C_{\psi,n}\, t$, for $t$ big enough.

{\bf Case 3:} $k_t$ is the kernel of $S_t$. In \eqref{kt-formula}, we obtain
$$
\check m_k(t+v)  - \check m_k(v-t) - (-1)^k \check m_k(t-v) + (-1)^k \check m_k(-t-v),
$$
but $m_k$ changes to $m_k(s) = s^k \psi(s) I_{[0,+\infty)}$ and not with $s^{k+1}$. As a consequence, we cannot integrate by parts twice as in \eqref{mk-by-parts} but only once; this changes the estimate \eqref{mk} as to
\begin{equation}\label{mk-sin}
|\check m_k(\xi)|\le C_{n,\psi}|\xi|^{-1}.
\end{equation}
Let us write down the two parts of $k_t$, up to the constant $c_l/4$:
\begin{align*}
I_1 &=  e^{-\frac{nx}2} \int_R^\infty (\ch v-\ch R)^{l-\frac n2} \sum_{k=0}^l\, q_{k,l}(v)
[ \check m_k(t+v) + (-1)^k \check m_k(-t-v) ] dv,
\\I_2 &= e^{-\frac{nx}2}  \int_R^\infty (\ch v-\ch R)^{l-\frac n2} 
\sum_{k=0}^l\, q_{k,l}(v) [ (-1)^{k+1} \check m_k(t-v)- \check m_k(v-t)]\, dv.
\end{align*}
With \eqref{mk-sin} instead of \eqref{mk}, we obtain
$$
\int_{R\in[t+a,t+b]} |I_1| dx dy \le C_{n,\psi}.
$$
The estimates of $|I_2-I_3|$ and $|I_3-I_4|$ do not change since we are using only the fact $m_k\in L^1(\R)$ which remains true.
In $I_4$ we get \eqref{Ixi} with
$$
\cal M(\xi) = \sum_{k=0}^l\, a_{k,l} \big[-\check m_k(-\xi) + (-1)^{k+1} \check m_k(\xi)\big],
$$
and complete the proof as in Case 2.
\end{proof}

It is well known that $L^1$-norm of a function $f$ is also the norm of the convolution operator $g\mapsto g*f$ on $L^1(G)$. We get as a corollary that upper norm estimates of \cite{mueller-thiele} for $C_t$, $S_t$ are sharp:
$$
\|C_t\|_{L^1\to L^1} \asymp t, \quad \|S_t\|_{L^1\to L^1} \asymp t,
$$
as $t\to\infty$.
Our results are valid in particular for $\psi(s) = (1+s^2)^{-\alpha}$, $\a>n/2+3$.

\subsection{Lower bounds for uniform norms}\label{sec-uniform}

In \cite[Corollary 7.1]{mueller-thiele}, M\"uller and Thiele show that for a function $\psi\in C^\infty_0(\R)$ supported in $[a,b]$, the kernel $k_t$ of $C_t = \psi(\sqrt{\cal L}) \cos(t\sqrt{\cal L})$ has its uniform norm bounded by a constant, with no decay in $t$ as $t\to+\infty$. With a simple modification, this result can be extended to functions which are not compactly supported, but decaying quickly enough.

From the results obtained in Sections \ref{sec-I1-I2}-\ref{sec-conclusions}, it follows that this estimate is actually sharp.

\begin{thm}
In the assumptions and notations of Theorem \ref{th-L1}, there exists a constant $C_{n,\psi}$ depending on $n$ and on $\psi$ such that for $t>0$ sufficiently large,
$$
\|k_t\|_\infty \ge C_{n,\psi}.
$$
\end{thm}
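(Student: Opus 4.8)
The plan is to establish the lower bound at a single, carefully chosen point of $G$, rather than by integrating over a region of positive measure as in Theorem \ref{th-L1}. The pointwise lower bound \eqref{I4-ptwise} reads $|I_4|>C_{n,\psi}e^{-\frac n2(x+R)}$ and holds at \emph{every} point with $R(x,y)\in[t-\beta,t-\alpha]$; the exponential weight $e^{-\frac n2(x+R)}$ is precisely what forced the use of volume growth to produce the factor $t$ in the $L^1$ estimate. For the uniform norm it suffices to pick a point where this weight equals $1$.

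First I would fix $a=-\beta$, $b=-\alpha$ as in Section \ref{sec-I4}, take any $R\in[t+a,t+b]$, and consider the point $(x,y)=(-R,0)\in G$. By \eqref{R} and the evenness of $\ch$ we have $R(-R,0)=\arcch(\ch R)=R$, so this point lies in the admissible range; and crucially $x+R=0$ there, so that $e^{-\frac n2(x+R)}=1$. The only thing to verify is thus that $(-R,0)$ sits in the range where \eqref{I4-ptwise} was proved, which it does since that bound is pointwise, not merely an averaged one.

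Next I would read off the four pointwise estimates at this point. The lower bound \eqref{I4-ptwise} now gives $|I_4|>C_{n,\psi}$, a constant independent of $t$. The error terms, by contrast, all carry genuine decay in $R$: \eqref{I1-ptwise} yields $|I_1|\le C_{n,\psi}(t+R)^{-2}$, while the pointwise bounds established just before \eqref{int-I2-I3} and \eqref{int-I3-I4} give $|I_2-I_3|\le C_{n,\psi}e^{-Rl}$ and $|I_3-I_4|\le C_{n,\psi}e^{-2R}$ once $x+R=0$ (these are vacuous in the degenerate cases $l=0$ and $l=\frac n2$, respectively). Since $R\ge t+a\to+\infty$, all three tend to $0$.

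Assembling these by the triangle inequality,
\begin{align*}
|k_t(-R,0)|
&=\tfrac{c_l}2|I_1-I_2|
\ge\tfrac{c_l}2\big(|I_4|-|I_4-I_3|-|I_3-I_2|-|I_1|\big)\\
&>\tfrac{c_l}2\big(C_{n,\psi}-C_{n,\psi}e^{-2R}-\gamma_l C_{n,\psi}e^{-Rl}-C_{n,\psi}(t+R)^{-2}\big),
\end{align*}
and for $t$ large the three subtracted terms are dominated by half of the leading constant, so $\|k_t\|_\infty\ge|k_t(-R,0)|\ge C'_{n,\psi}>0$. The cases $C_t$ and $S_t$ go through verbatim with the decompositions from the proof of Theorem \ref{th-L1}; for $S_t$ the weaker bound \eqref{mk-sin} merely replaces $(t+R)^{-2}$ by $(t+R)^{-1}$ in the $I_1$ term, which still vanishes. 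The genuine content is entirely contained in the pointwise work of Sections \ref{sec-I1-I2}--\ref{sec-conclusions}; the only new observation is that evaluating at $(-R,0)$ neutralizes the exponential weight, so I do not expect a real obstacle beyond noting that the previous sections supply the pointwise estimates and not merely their integrals.
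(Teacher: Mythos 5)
Your proposal is correct and follows essentially the same route as the paper: the paper's proof also evaluates $k_t$ at the point $(-R,0)$ with $R\in[t+a,t+b]$, where the weight $e^{-\frac n2(x+R)}$ equals $1$, and combines the pointwise bounds \eqref{I4-ptwise}, \eqref{I1-ptwise} and the $I_2$--$I_3$, $I_3$--$I_4$ comparisons to get $|k_t(-R,0)|\ge C_{n,\psi}(1-e^{-2t}-\gamma_l e^{-lt}-t^{-2})$ (with $t^{-1}$ for $S_t$). No discrepancies to report.
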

\begin{proof}
We can consider the cases of $E_t$ and $C_t$ together.

{\bf Case 1:} $k_t$ is the kernel of $E_t$ or $C_t$. By the results of Section \ref{sec-I4}, there exist $a,b\in\R$ such that \eqref{I4-ptwise} holds, for $t>\max(1,2|a|,b)$. It is clear then that we should consider $x=-R$, $y=0$. Arguing similarly to Theorem \ref{th-L1}, we obtain the estimate
$$
|k_t(-R,0)| \ge C_{n,\psi} \Big(1 - e^{-2t} - \gamma_l\, e^{-lt} - \frac1{t^2} \Big),
$$
which is bounded from below by $C_{n,\psi}$, as claimed.

{\bf Case 2:} $k_t$ is the kernel of $S_t$. As noted in the proof of Theorem \ref{th-L1}, we get the estimate \eqref{mk-sin} instead of \eqref{mk}. This changes the estimate of $I_1$ as
$$
|I_1(-R,0)| \le C_{n,\psi}\,\frac1t
$$
for $t$ large enough. The other estimates are as in Case 1, so that we arrive at
$$
|k_t(-R,0)| \ge C_{n,\psi} \Big(1 - e^{-2t} - \gamma_l\, e^{-lt} - \frac1t \Big),
$$
which is bounded from below by a constant $C_{n,\psi}>0$ as $t\to+\infty$.
\end{proof}

As pointed out in \cite{mueller-thiele}, this shows that no dispersive-type $L^1-L^\infty$ estimates hold for the wave equation.

\subsection*{Acknowledgments}

Yu. K. thanks Professor Waldemar Hebisch for valuable discussions on the general context of multipliers on Lie groups.

 This work was started during an ICL-CNRS fellowship of the second named author at the Imperial College London. Yu.~K. is supported by the ANR-19-CE40-0002 grant of the French National Research Agency (ANR). H.~Z. is supported by the European Union's Horizon 2020 research and innovation programme under the Marie Sk\poll odowska-Curie grant agreement No.~754411 and the Lise Meitner fellowship, Austrian Science Fund (FWF) M3337. 
R. A. was supported by the EPSRC grant EP/R003025.
M. R. is supported by the EPSRC grant EP/R003025/2 and by the FWO Odysseus 1 grant G.0H94.18N: Analysis and Partial Differential Equations.


\end{document}